\newcommand{\Z}{\mathbb Z}
\newcommand{\Q}{\mathbb Q}
\newcommand{\R}{\mathbb R}
\newcommand{\C}{\mathbb C}
\newcommand{\N}{\mathbb N}
\newcommand{\lb}{\lbrace}
\newcommand{\rb}{\rbrace}
\newcommand{\la}{\langle}
\newcommand{\ra}{\rangle}
\renewcommand{\phi}{\varphi}
\newcommand{\eps}{\varepsilon}
\DeclareMathOperator{\rk}{rk}
\DeclareMathOperator{\Id}{Id}
\DeclareMathOperator{\Img}{Im}
\DeclareMathOperator{\GL}{GL}
\DeclareMathOperator{\Homs}{\mathscr{H}\text{\kern -3pt {\calligra\large om}}\,}
\DeclareMathOperator{\str}{star}
\DeclareMathOperator{\GKM}{GKM}
\renewcommand{\leq}{\leqslant}
\renewcommand{\geq}{\geqslant}
\theoremstyle{plain}
\newtheorem{thm}{Theorem}[section]
\newtheorem{lm}[thm]{Lemma}
\newtheorem{cor}[thm]{Corollary}
\newtheorem{pr}[thm]{Proposition}
\theoremstyle{definition}
\newtheorem{rem}[thm]{Remark}
\newtheorem{defn}[thm]{Definition}
\newcounter{thmMaincounter}
\newtheorem{theoremM}[thmMaincounter]{Theorem}
\newtheorem{probM}[thmMaincounter]{Problem}
\begin{document}
    \title{
    Extensions of realizable Hamiltonian and complexity one GKM$_4$ graphs
    }

	\author{Oliver Goertsches}
	\address[O.\,Goertsches]{Philipps-Universit\"at Marburg, Germany}
	\email{goertsch@mathematik.uni-marburg.de}
	
	\author{Grigory Solomadin}
	\address[G.\,Solomadin]{Philipps-Universit\"at Marburg, Germany}
	\email{grigory.solomadin@gmail.com}
	
	\begin{abstract} We prove that the GKM graphs of GKM$_4$ manifolds that are either Hamiltonian or of complexity one extend to torus graphs. The arguments are based on a reformulation of the extension problem in terms of a natural representation of the fundamental group of the GKM graph, using a coordinate-free version of the axial function group of Kuroki, as well as on covers of GKM graphs and acyclicity results for orbit spaces of GKM manifolds.
	\end{abstract}
	
	\keywords{Torus actions, GKM theory, torus manifolds, torus graphs, Hamiltonian actions, Complexity $1$ actions}
	
	\subjclass[2020]{Primary: 57S12, 55N91, 13F55, 06A06 Secondary: 55P91, 55U10, 55T25, 57R91}
	
	\maketitle
	
	\section{Introduction}
	GKM manifolds are a natural, far-reaching generalization of (quasi)toric manifolds, named after Goresky--Kottwitz--MacPherson~\cite{gkm-98}. These are connected, compact, orientable manifolds with vanishing odd-degree cohomology, acted on by a compact torus, in a way that the orbit space of its equivariant one-skeleton has the structure of a graph. This graph, equipped with a labelling given by the weights of the isotropy representation at the fixed points, is called the GKM graph of the action. The condition on the shape of the one-skeleton is equivalent to the weights of any isotropy representation being pairwise linearly independent; in general, if any $k$ of these weights are linearly independent, one calls the action GKM$_k$. 
	The starting point for this note is the following problem.
	
	\begin{probM}[Masuda]\label{prob:mas}
		\begin{enumerate}[label=(\roman*), wide, labelwidth=!, labelindent=0pt, itemindent=!]
			\item Does any $\GKM_{4}$ manifold $M^{2n}$ extend to a torus manifold? (I.e., does the action extend to an effective action of $T^n$?)
			\item Does the GKM graph $\Gamma$ of any $\GKM_{4}$ manifold $M^{2n}$ extend to a torus graph? (I.e., is there an abstract GKM graph with effective $T^n$-labeling which restricts to $\Gamma$?) 
		\end{enumerate}
	\end{probM}
	
	A positive answer to item $(i)$ of Problem~\ref{prob:mas} implies a positive answer to item $(ii)$; in this note we will focus on item $(ii)$. In the language of abstract GKM graphs~\cite{gu-za-01}, we show (see Theorem~\ref{thm:extmain}):
	\begin{theoremM}\label{thm:2}
		Let $\Gamma$ be any unsigned (signed, respectively) $\GKM_{3}$ graph such that the conjugated $2$-faces in $\Gamma$ generate the fundamental group $\pi_{1}(\Gamma)$.
		Suppose that the monodromy along any $2$-face of $\Gamma$ is trivial on the transversal edges of this $2$-face. 
		Then $\Gamma$ admits an extension to an unsigned (signed, respectively) torus graph. 
	\end{theoremM}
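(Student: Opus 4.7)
The plan is to apply the reformulation outlined in the abstract: an extension of a $\GKM_3$ graph $\Gamma$ to a torus graph corresponds to the construction of a representation of $\pi_{1}(\Gamma)$ compatible with the existing axial function data, valued in a coordinate-free version of Kuroki's axial function group $\mathcal{A}(\Gamma)$. With this dictionary in place, producing an extension reduces to constructing such a representation, so the proof splits into two tasks: define the extended data along a set of generators of $\pi_{1}(\Gamma)$, and then verify that the defining relations are respected.

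For the first task, at each vertex $v$ of $\Gamma$ one freely adjoins the missing transversal directions to enlarge the axial function group, and extends transport along each existing edge $e$ so that it agrees with the given connection on the old edges. The $\GKM_{3}$ hypothesis enters here to guarantee that at every vertex the existing weights are in sufficiently general position to match along the connection without ambiguity coming from the local data.

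For the second task, the assumption that conjugates of $2$-face loops generate $\pi_{1}(\Gamma)$ means it suffices to verify that the extended representation respects the relation imposed by going around each $2$-face. The monodromy along a $2$-face $F$ in $\Gamma$ is already trivial on the two edges of $F$ at each of its vertices, by the very definition of the connection of a face; by the second hypothesis of the theorem it is additionally trivial on all transversal edges at each vertex of $F$. Hence the induced action on the full edge set at any vertex of $F$ is trivial, which is precisely the relation required in the enlarged axial function group. Putting the two tasks together, the representation descends to $\pi_{1}(\Gamma)$ and produces the sought extension, both in the signed and in the unsigned setting depending on which version of $\mathcal{A}(\Gamma)$ is used.

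The main obstacle I anticipate is the technical bookkeeping in passing from $\Gamma$ to its extension: one must verify that the freely chosen extension of the connection on transversal edges is globally consistent, and that at every vertex the enlarged set of weights forms a basis of the ambient lattice, so that the output is genuinely a torus graph rather than a larger $\GKM$ graph. A secondary subtlety is the uniform treatment of the signed and unsigned cases, which requires working with the appropriate variant of the axial function group throughout and tracking how signs interact with the $2$-face monodromy.
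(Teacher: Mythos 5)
Your high-level skeleton matches the paper's: reduce, via the identification $A(\Gamma)\cong(\Z\str u)^{\pi_1(\Gamma,u)}$, to showing that the A-monodromy is trivial along each conjugated $2$-face, and use the transversality hypothesis for the directions not lying in the face. However, there is a genuine gap in your treatment of the two face-edge directions. You assert that the monodromy around a $2$-face $F$ is ``already trivial on the two edges of $F$ \ldots by the very definition of the connection of a face.'' This conflates the parallel transport $\Pi_\gamma$ with the operators $\varphi_e$ that actually define the A-monodromy representation: on the edge $e$ itself one has $\varphi_e(e)=\overline{e}+\eps(\overline{e})\cdot c(\overline{e})$, so the composite around the face involves the invariant function $c$ and does not even preserve the splitting of $\Z\str v$ into face and transversal directions. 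Triviality in the remaining two directions is not automatic; the paper obtains it by observing that the restriction of the given axial function to the $2$-face is $2$-independent, hence $A(\Gamma)$ contains two elements whose values at $v$ remain linearly independent after projection to $\Z e_1\oplus\Z\overline{e_q}$. These two invariant vectors, together with the $n-2$ fixed transversal edges, span a finite-index subgroup of $\Z\str v$, forcing $\varphi_\gamma=\mathrm{id}$. Without some such input from the original labeling, your argument does not close.

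A second, smaller gap is the passage from ``the representation is trivial'' (equivalently $\rk A(\Gamma)=n$) to the existence of an extension that is genuinely an effective torus graph. You flag this yourself as an anticipated obstacle (global consistency of the ``freely adjoined'' directions, and that the enlarged weights are almost effective), but you do not resolve it. In the paper this is exactly the content of Kuroki's extension criterion (Theorem~\ref{thm:kurrk}, extended to the unsigned case): linearly independent elements of $L(\Gamma)\cong A(\Gamma)$ assemble into the components of an extended axial function, and effectivity is arranged by a change of basis of the span. Your proof should either invoke that criterion explicitly or reprove it; as written, the construction of the extension from the representation-theoretic data is only sketched. The uniform signed/unsigned treatment you worry about is unproblematic once one works with an arbitrary sign function $\eps$ and specializes to $\eps\equiv 1$ for the signed case.
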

	Here, a conjugated $2$-face is any loop given by a $2$-face (aka connection path) in $\Gamma$, conjugated by a path from the chosen base point, in order to yield an element in the fundamental group $\pi_1(\Gamma)$ -- see Definition~\ref{defn:conjugated2face}.
	The condition on the monodromy is automatically satisfied in the GKM$_4$ case. 
	
	Our main tool for the proof of Theorem~\ref{thm:2} is Kuroki's axial function group~\cite{ku-19} and his criterion for the existence of nontrivial extensions of labelings of GKM graphs. The group of axial functions was defined in~\cite{ku-19} for signed GKM graphs (e.g., those of GKM manifolds with an invariant almost complex structure).
	A signed GKM graph yields an unsigned GKM graph by composing the axial function with the quotient of the lattice by $\lb\pm 1\rb$.
 	In Section~\ref{sec:extension} we extend Kuroki's results to unsigned GKM graphs, and use index-free notation to rewrite the axial function group as the group of invariants under a natural representation of the fundamental group $\pi_1(\Gamma)$.
	
	As concrete situations where Theorem~\ref{thm:2} applies we consider two geometrically very different cases. Firstly, we prove (see Corollary~\ref{cor:hamcase}):
	\begin{theoremM} \label{thm:mainthmham}
		Let $\Gamma$ be the GKM graph of any Hamiltonian GKM$_3$ manifold. Then the conjugated $2$-faces in $\Gamma$ generate the fundamental group $\pi_1(\Gamma)$.
	\end{theoremM}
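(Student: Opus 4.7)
The plan is to recast the generation claim as simple connectivity of an orbit space of $M$, and then derive this simple connectivity from the Hamiltonian structure via known acyclicity properties of orbit spaces.

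First, set $X:=\{x\in M:\dim(T\cdot x)\le 2\} = \bigcup_{\codim K\le 2} M^{K}$, where $K$ ranges over subtori of $T$. Under the $\GKM_{3}$ hypothesis, each connected component of $M^{K}$ for a codimension-$2$ subtorus $K$ is a compact $4$-dimensional symplectic submanifold carrying an effective Hamiltonian action of $T/K \cong T^{2}$; hence it is symplectic toric, and its $T$-orbit space is a $2$-disk whose boundary is the corresponding $2$-face of $\Gamma$. It follows that $X/T$ is homeomorphic to the $2$-complex obtained from $\Gamma$ by attaching a $2$-disk along each $2$-face, and Seifert--van Kampen identifies $\pi_{1}(X/T)$ with the quotient of $\pi_{1}(\Gamma)$ by the normal subgroup generated by the conjugated $2$-faces. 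The theorem is thus equivalent to $\pi_{1}(X/T)=1$.

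Next, I would prove $\pi_{1}(X/T)=1$ using the Hamiltonian structure. The orbit space $M/T$ of any compact Hamiltonian $T$-manifold is simply connected, for instance as a consequence of the Atiyah--Guillemin--Sternberg convexity theorem combined with the simple connectivity of its symplectic reductions. One then shows that the inclusion $X/T \hookrightarrow M/T$ is a $\pi_{1}$-isomorphism by exhibiting $M/T$ as built from $X/T$ by attaching pieces coming from the higher-codimension isotropy strata. By the symplectic slice theorem, at an orbit with stabilizer $K$ of codimension $k\ge 3$, the associated orbit-space stratum has dimension at least $k\ge 3$ and therefore contributes no new $1$-cycles; combining this with $\pi_{1}(M/T)=1$ yields $\pi_{1}(X/T)=1$.

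The hard part is the dimension count for the higher-codimension strata: one must verify via the symplectic slice theorem that every codimension-$k$ (with $k\ge 3$) isotropy stratum yields an orbit-space piece of dimension $\ge 3$, and here the $\GKM_{3}$ condition on the weights at fixed points is used to exclude degenerate local configurations. An alternative, more hands-on route would invoke Morse theory on a generic component $\mu^{\xi}$ of the moment map: the critical set is $M^{T}$ with only even Morse indices, and cells of Morse index $\ge 4$ descend to $2$-dimensional orbit-space cells bounded by $2$-faces of $\Gamma$, realizing $X/T$ as the $2$-skeleton of a simply connected CW-model for $M/T$. Either route aligns with the paper's strategy of combining covers of GKM graphs with acyclicity of orbit spaces.
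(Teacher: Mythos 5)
Your first step is fine: under the $\GKM_3$ hypothesis the rank-$2$ face submanifolds are indeed $4$-dimensional symplectic toric pieces with disk orbit spaces, so $Q_2=X/T$ is $\Gamma$ with a $2$-disk glued along each $2$-face, and the theorem is equivalent to $\pi_1(Q_2)=1$; likewise $\pi_1(M/T)=1$ follows from the even-index Morse theory of a generic moment map component. The genuine gap is the step you yourself call the hard part: the injectivity of $\pi_1(Q_2)\to\pi_1(Q)$. The dimension count does not deliver it. Attaching pieces of dimension $\geq 3$ preserves $\pi_1$ only when they are attached as \emph{cells}, i.e.\ along simply connected spheres; the higher orbit-type strata of $Q$ are glued to $Q_2\cup\cdots$ along frontiers that can carry nontrivial loops, and by van Kampen such a gluing can kill classes of $\pi_1(Q_2)$. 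Exactly this assertion is flagged in the final remark of Section~\ref{sec:covgkm} as an unresolved gap in \cite[Proposition 5.18]{ay-ma-so-23} ("it is not clear why the orbit filtration $Q_{n-1}\to Q$ induces monomorphisms of the respective fundamental groups"), and the remark on \cite{wa-23} in Section~\ref{sec:extension} stresses that the available acyclicity results (Theorem~\ref{thm:ams}, Corollary~\ref{cor:perfg}) only give $H_1(Q_2)=0$, i.e.\ perfectness of $\pi_1(Q_2)$, not triviality. Your Morse-theoretic alternative does not repair this: the unstable manifold of a fixed point of index $2\lambda$ is $T$-equivariantly a cone $\C^\lambda$ whose quotient $\C^\lambda/T$ is in general neither $2$-dimensional nor a cell, so it does not exhibit $Q_2$ as the $2$-skeleton of a CW model of $Q$.

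The paper's proof takes a different and purely combinatorial route that avoids orbit-space topology altogether. By \cite[Theorems 1.4.2 and 1.4.4]{gu-za-01}, a generic component of the moment map induces an acyclic orientation $o$ on $\Gamma$ with $b_4(F,o)=1$ for every $2$-face $F$, i.e.\ each $2$-face has a unique local maximum. Theorem~\ref{thm:hamcase} then decomposes an arbitrary cycle into conjugated $2$-faces by a double induction on the height and multiplicity of its maximal vertex: at a maximal vertex $v$ one slides the cycle across the $2$-face spanned by the two adjacent edges, using the unique local maximum to guarantee that the new path strictly decreases the induction invariant. If you want to keep your orbit-space strategy, you would first have to close the $\pi_1$-monomorphism gap, which the authors explicitly regard as open; otherwise the combinatorial argument via the acyclic orientation is the way to go.
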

	Combining this statement with Theorem~\ref{thm:2}, it follows that the GKM graph of any Hamiltonian GKM$_4$ manifold extends to a torus graph.  This result positively resolves a particular case of Problem~\ref{prob:mas} $(ii)$, under the additional Hamiltonian assumption.
	Theorem~\ref{thm:mainthmham} follows from a more general statement about GKM$_3$ graphs with an acyclic orientation of the edges, so that every $2$-face has a unique local maximum with respect to the orientation, see Theorem~\ref{thm:hamcase}.
	
	The second situation concerns GKM$_4$ manifolds of complexity $1$, meaning that we are given an effective action of a torus of dimension $n-1$ on a manifold of dimension $2n$.
	
	\begin{theoremM}\label{thm:3}
		The GKM graph of any $\GKM_{4}$ manifold of complexity one admits an extension to a torus graph.
	\end{theoremM}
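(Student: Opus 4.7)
The plan is to apply Theorem~\ref{thm:2} to the GKM graph $\Gamma$ of a complexity-one $\GKM_4$ manifold $M^{2n}$. Since $\Gamma$ is $\GKM_4$, the monodromy condition on transversal edges of each $2$-face is automatic, as noted immediately after the statement of Theorem~\ref{thm:2}. It therefore suffices to prove that the conjugated $2$-faces generate $\pi_1(\Gamma)$.

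To establish this, I would first translate the combinatorial statement into a topological one about orbit spaces. Let $X_k\subset M$ denote the equivariant $k$-skeleton, i.e., the union of $T$-orbits whose stabilizer has codimension at most $k$ in $T$. Then $X_1/T$ is canonically identified with the realization $|\Gamma|$, while $X_2/T$ is built from $|\Gamma|$ by attaching one $2$-cell along the cycle defining each $2$-face of $\Gamma$: indeed, every $2$-face corresponds to a $4$-dimensional $T^2$-invariant submanifold of $M$ whose $T^2$-orbit space is a closed disk. Consequently, the conjugated $2$-faces generate $\pi_1(\Gamma)$ if and only if $X_2/T$ is simply connected.

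To prove the latter, I would compare $X_2/T$ with the full orbit space $M/T$. The inclusion $X_2/T\hookrightarrow M/T$ is obtained by attaching to $X_2/T$ the strata corresponding to $T$-orbits with stabilizers of codimension $\geq 3$; each such stratum contributes cells of dimension $\geq 3$ in $M/T$, so the inclusion induces an isomorphism on $\pi_1$. It then remains to invoke the acyclicity result for orbit spaces of GKM manifolds alluded to in the abstract, which in the complexity-one setting should assert that $M/T$ is simply connected. Chaining the two identifications gives $\pi_1(X_2/T)\cong\pi_1(M/T)=1$, so Theorem~\ref{thm:2} finishes the proof.

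The main obstacle is precisely the acyclicity input: showing that $M/T$ is simply connected for complexity-one $\GKM_4$ manifolds. I would expect this to be proved via Morse theory on $M$ with a torus-generic function whose critical set is $M^T$, using the $\GKM_3$ connectivity together with the complexity-one hypothesis to control path-connectedness of the level sets in $M/T$; alternatively, a covering-space argument using covers of GKM graphs (as mentioned in the abstract) could lift the question to a simply connected cover where a direct analysis applies. The cell-counting step relating $X_2/T$ to $M/T$ should then be routine from the stratification by orbit type, together with the fact that $\dim M/T=n+1$.
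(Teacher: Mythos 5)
Your reduction is correct as far as it goes: since the graph is $\GKM_4$ the monodromy hypothesis of Theorem~\ref{thm:2} is automatic, and by van Kampen the conjugated $2$-faces generate $\pi_1(\Gamma)$ if and only if $Q_2$ is simply connected. But the proof then rests entirely on establishing $\pi_1(Q_2)=1$, and this is precisely the point that is \emph{not} available. The acyclicity result (Theorem~\ref{thm:ams}) only gives $\widetilde H^i(Q_2;\Z)=0$ in low degrees, hence $H_1(Q_2)=0$, i.e.\ $\pi_1(Q_2)$ is \emph{perfect} (Corollary~\ref{cor:perfg}) --- not trivial. The paper explicitly flags this as an open gap in earlier attempts: there is no known argument why $\pi_1(Q_2)$ should vanish for a GKM$_4$ manifold. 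Likewise, your auxiliary claim that the inclusion $Q_2\hookrightarrow Q$ induces an isomorphism on $\pi_1$ because the higher strata ``contribute cells of dimension $\geq 3$'' is exactly the unjustified step the paper points out in \cite[Proposition 5.18]{ay-ma-so-23}: the orbit-type stratification is not a CW structure, and no such cell-attachment argument is known. Finally, note that your outline never uses the complexity-one hypothesis in an essential way, which is a warning sign: Theorem~\ref{thm:3} fails for abstract GKM$_4$ graphs without it, so any correct proof must exploit it.

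The paper's actual route is designed to sidestep the unknown group $\pi_1(Q_2)$ rather than kill it. One passes to the universal cover $p\colon\widetilde{Q_2}\to Q_2$ and lifts $\Gamma$ to a (possibly infinite) GKM graph $\widetilde{\Gamma}=p^{-1}(Q_1)$; since $\widetilde{Q_2}$ is simply connected and is obtained from $\widetilde{\Gamma}$ by gluing $2$-cells along lifted $2$-faces, the fundamental group of $\widetilde{\Gamma}$ \emph{is} generated by conjugated $2$-faces, so Theorem~\ref{thm:extmain} applies upstairs and gives $\rk A(\widetilde{\Gamma})=n$. The complexity-one hypothesis then enters crucially: pulling back the $n-1$ components of the axial function of $\Gamma$ gives $n-1$ linearly independent elements of $A(\widetilde{\Gamma})$ invariant under the deck group $D=\pi_1(Q_2)$, so $D$ acts on $A(\widetilde{\Gamma})\otimes\Q$ by matrices differing from the identity only in one column. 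Perfectness of $D$ (which \emph{is} available from acyclicity) forces the resulting determinant character and the off-diagonal additive characters to vanish, so the $D$-action is trivial, $\rk A(\Gamma)=\rk A(\widetilde{\Gamma})^D=n$, and Kuroki's criterion (Theorem~\ref{thm:kurrk}) yields the extension. If you want to salvage your approach, you would need to either prove $\pi_1(Q_2)=1$ (open) or replace that step with this descent argument through the cover.
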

	
	Theorem~\ref{thm:3} positively resolves another particular case for Problem~\ref{prob:mas} $(ii)$, namely, in complexity $1$.
	To prove Theorem~\ref{thm:3} (see Theorem~\ref{thm:comp1}) we construct a certain GKM cover $p\colon\widetilde{\Gamma}\to\Gamma$ by a (possibly infinite) GKM graph $\widetilde{\Gamma}$ whose conjugated $2$-faces generate the respective fundamental group.
	Then we apply Theorem~\ref{thm:2} to obtain an extension of $\widetilde{\Gamma}$ to a torus graph. We conclude by showing that the action of the deck transformation group on the axial function group of $\widetilde{\Gamma}$ is trivial, making use of the fact that it is a perfect group by the acyclicity results for orbit spaces of equivariant skeleta in~\cite{ay-ma-23}. Notice that Theorem~\ref{thm:3} does not hold for abstract GKM graphs by \cite{so-23}.
	\medskip
	
	\noindent {\bf Acknowledgements:} The authors gratefully acknowledge funding of the Deutsche Forschungsgemeinschaft
	(DFG, German Research Foundation): Project numbers 452427095, and 561158824 (the second named author's Walter Benjamin Fellowship). The second named author is grateful to S.\ Kuroki and M.\ Masuda (in particular for organizing the conference ``Toric Topology in Okayama 2019'', where he was happy to participate and present some of the ideas from the current note for the first time). We wish to thank P.\ Konstantis, L.\ Zoller and N.\ Wardenski for various stimulating discussions on the subject of this note. 
	\section{Graphs, connections, invariant functions}\label{sec:gkmbasic}
	
	This section contains introductory definitions and basic facts about unsigned and signed GKM graphs required for understanding the main results of this note. 
	
	\begin{defn}
		Let $(V,E)$ be an (abstract) connected graph (without loops, but possibly with multiple edges) with vertices $V$, and directed edges $E$ connecting them (such that for any $e\in E$ also the reversed edge $\overline{e}$ is an element of $E$). 
		The initial and terminal vertices of an edge $e\in E$ are denoted by $i(e),t(e)\in V$, respectively. 
		By definition, the \textit{star} of $(V,E)$ at $v\in V$ consists of all edges at the vertex $v$ in $(V,E)$:
		\[
		\str v=\str_{(V,E)} v:=\lb e\in E\mid i(e)=v\rb.
		\]
		The graph $(V,E)$ is called \textit{$n$-valent}, if $|\str v|=n$ holds for any $v\in V$, where $|X|$ denotes the cardinality of a finite set $X$.
	\end{defn}
	
	In what follows, we assume that $(V,E)$ is a connected $n$-valent graph.
	
	\begin{defn}[\cite{gu-za-01}]
		A set $\nabla=\lb \nabla_{e}\mid e\in E\rb$ of bijective maps $\nabla_{e}:\str i(e)\to \str t(e)$ satisfying
		\begin{enumerate}
			\item $\nabla_{\overline{e}}=(\nabla_{e})^{-1}$;
			\item $\nabla_{e}e=\overline{e}$;
		\end{enumerate}
		is called a \textit{(combinatorial) connection} on the graph $(V,E)$.
	\end{defn}
	
	We call any of the two integer vectors $a$ and $-a$ in $\Z^k$ a \textit{lift} of an element $\pm a\in \Z^k/\pm 1$.
	
	\begin{defn}[\cite{gu-za-01}, \cite{go-ko-zo-22}]\label{defn:abstractunsignedgraph}
		An \emph{(abstract) (unsigned) GKM graph} is a tuple $\Gamma=(V,E,\nabla,\alpha)$ consisting of an abstract graph $(V,E)$ as above, a connection $\nabla$ on $(V,E)$, as well as an \emph{(unsigned) axial function} $\alpha:E\to \Z^k/\pm 1$ that satisfy:
		\begin{enumerate}
			\item For each $e\in E$, $\alpha(\overline{e}) = \alpha(e)$.
			\item For each $v\in V$ and $e,e'\in \str v$ with $e\neq e'$, $\alpha(e)$ and $\alpha(e')$ are linearly independent.
			\item The axial function $\alpha$ is \emph{compatible with the connection}, i.e., for all edges $e,e'$ at any vertex $v\in V$ and for any lift $\widetilde{\alpha}(\nabla_e e')$ of $\alpha(\nabla_e e')$ and $\widetilde{\alpha}(e')$ of $\alpha(e')$, there exists a sign $\varepsilon\in \{\pm 1\}$ such that 
			\[
			\widetilde{\alpha}(\nabla_e e') \in \varepsilon  \widetilde{\alpha}(e') + \Z \alpha(e).
			\]
		\end{enumerate}
	\end{defn}
	\begin{rem}
		Often, in the definition of an abstract GKM graph, the connection is not assumed to be part of the structure; instead, it is only assumed that there exists a connection that is compatible with a given axial function. See for instance~\cite{gu-za-01}. For the results of this note it is natural to fix also the connection, as we are concerned with extension problems with respect to a given connection, see Definition \ref{def:ext} below.
	\end{rem}
	
	We will need to keep track of the signs $\varepsilon$ and the integer coefficients in the above congruence relations. To this end we fix, for easier bookkeeping, an arbitrary global lift
	\[
	\widetilde{\alpha}:E\longrightarrow \Z^k
	\]
	of $\alpha$ (i.e., $\pi\circ \widetilde{\alpha} = \alpha$, where $\pi:\Z^k\to \Z^k/\pm 1$ is the canonical projection), satisfying no further conditions except $\widetilde{\alpha}(\overline{e})= -\widetilde{\alpha}(e)$ for all $e\in E$. This lift $\widetilde{\alpha}$ is not to be confused with the choice of a signed structure on the GKM graph (see Definition \ref{defn:abstractsignedgraph} below).
	
	In order to avoid having to choose an ordering of the edges at each vertex, we introduce the following notation.
	Define a nondegenerate scalar product on the free $\Z$-module $\Z\str v$ generated by the finite set $\str v$ by requiring $\str v$ to be an orthonormal basis, i.e.,
	\[
	\la e,e'\ra=\delta_{e}^{e'},
	\]
	where $\delta$ is the Kronecker delta.
	The connection $\nabla$ induces homomorphisms 
	\[
	\nabla_{e}:\ \Z \str i(e)\to \Z \str t(e),
	\] 
	of $\Z$-modules by acting on the generators.
	
	Denote the group $\lb \pm 1\rb$ by $\mu_{2}$. 
	Let $\mu_2 \str v$ be the free $\mu_{2}$-module generated by the finite set $\str v$.
	This group acts on $\Z \str v$ by the formula below:
	\[
	\mu_2 \str v\times \Z \str v\to \Z \str v,\ 
	x\cdot y := \sum_{e\in \str v} \langle x,e\rangle \langle y,e\rangle e,
	\]
	which is nothing but componentwise multiplication in the standard basis. Here, we wrote $x\in \mu_2 \str v$ in the standard basis as $x = \sum_{e\in \str v} \langle x,e\rangle e$, with $\langle x,e\rangle \in \mu_2$.
    In a similar way to the above case of $\Z$-modules, we extend $\nabla_{e}$ uniquely to a homomorphism of free $\mu_{2}$-modules $\mu_{2}\str i(e)\to \mu_{2}\str t(e)$.
	
	Given the lift $\widetilde\alpha$ of the axial function, we denote, for $e\in E$, by $\varepsilon(e)\in \mu_2 \str i(e)$ and by $c(e)\in \Z \str i(e)$ the signs and integers such that
	\begin{equation} \label{eq:epsandc}
		\widetilde{\alpha}(\nabla_e e') = \langle \varepsilon(e),e'\rangle\cdot \widetilde{\alpha}(e') + \langle c(e),e'\rangle \cdot \widetilde{\alpha}(e);
	\end{equation}
	we make $\varepsilon$ and $c$ unique by requiring 
	\begin{equation}\label{eq:epsandcunique}
		\langle \varepsilon(e),e\rangle = 1 \quad \textrm{and} \quad \langle c(e),e\rangle = -2.
	\end{equation}
	
	\begin{defn}\label{defn:invfunc} Consider a graph $(V,E)$, together with a connection $\nabla$. A \textit{sign function} $\eps$ on $(V,E,\nabla)$ is a collection $\eps=(\eps(e))_{e\in E}$ of elements $\eps(e)\in\mu_{2} \str i(e)$ such that
		\begin{enumerate}
			\item $\la \eps(e),e\ra=1$;
			\item $\nabla_{e} \eps(e)=\eps(\overline{e})$.
		\end{enumerate}
		hold for all $e\in E$. Given a sign function $\varepsilon$, an \textit{invariant function} $c$ on $(V,E,\nabla,\eps)$ is a collection $c=(c(e))_{e\in E}$ of elements $c(e)\in\Z \str i(e)$ such that
		\begin{enumerate}[label=(\arabic*$'$)]
			\item $\la c(e),e\ra=-2$;
			\item $\nabla_{e} c(e)=\eps(\overline{e})\cdot c(\overline{e})$.
		\end{enumerate}
		hold for all $e\in E$.
	\end{defn}
	
	The notion of an invariant function was introduced, in the context of signed GKM graphs, in~\cite{ku-19}.
	\begin{lm}
		Given an abstract unsigned GKM graph, $\varepsilon = (\varepsilon(e))_{e\in E}$ and $c = (c(e))_{e\in E}$ as defined by~\eqref{eq:epsandc} and~\eqref{eq:epsandcunique} are a sign respectively invariant function.
	\end{lm}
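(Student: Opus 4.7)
The plan is to verify the four conditions of Definition~\ref{defn:invfunc} for the collections $\varepsilon=(\varepsilon(e))_{e\in E}$ and $c=(c(e))_{e\in E}$ singled out by \eqref{eq:epsandc}--\eqref{eq:epsandcunique}. Existence and uniqueness of such $\varepsilon(e)\in\mu_{2}\str i(e)$ and $c(e)\in\Z\str i(e)$ follows directly from Definition~\ref{defn:abstractunsignedgraph}: for $e'\neq e$, condition (2) forces $\widetilde{\alpha}(e'),\widetilde{\alpha}(e)$ to be linearly independent, so the decomposition of $\widetilde{\alpha}(\nabla_e e')$ as $\pm\widetilde{\alpha}(e')+\Z\widetilde{\alpha}(e)$ supplied by condition (3) uniquely determines $\langle\varepsilon(e),e'\rangle$ and $\langle c(e),e'\rangle$; for $e'=e$, the identity $\widetilde{\alpha}(\nabla_e e)=\widetilde{\alpha}(\overline{e})=-\widetilde{\alpha}(e)$ is consistent with \eqref{eq:epsandc} under the normalization \eqref{eq:epsandcunique}. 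Conditions $(1)$ and $(1')$ of Definition~\ref{defn:invfunc} are therefore built into \eqref{eq:epsandcunique}.

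The main task is the verification of the transport relations $(2)$ and $(2')$. Pairing against the basis vector $\nabla_e e''\in\str t(e)$ (for arbitrary $e''\in\str i(e)$), and using that $\nabla_e$ permutes the orthonormal basis, these reduce to the scalar identities
\[
\langle\varepsilon(\overline{e}),\nabla_e e''\rangle=\langle\varepsilon(e),e''\rangle,\qquad \langle\varepsilon(\overline{e})\cdot c(\overline{e}),\nabla_e e''\rangle=\langle c(e),e''\rangle.
\]
To obtain them, apply \eqref{eq:epsandc} at the edge $\overline{e}$ with input $\nabla_e e''\in\str i(\overline{e})$; using $\nabla_{\overline{e}}=(\nabla_e)^{-1}$ and $\widetilde{\alpha}(\overline{e})=-\widetilde{\alpha}(e)$ gives
\[
\widetilde{\alpha}(e'')=\langle\varepsilon(\overline{e}),\nabla_e e''\rangle\,\widetilde{\alpha}(\nabla_e e'')-\langle c(\overline{e}),\nabla_e e''\rangle\,\widetilde{\alpha}(e).
\]
Substituting the expression from \eqref{eq:epsandc} for $\widetilde{\alpha}(\nabla_e e'')$ into this writes $\widetilde{\alpha}(e'')$ as a linear combination of $\widetilde{\alpha}(e'')$ and $\widetilde{\alpha}(e)$.

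For $e''\neq e$ the vectors $\widetilde{\alpha}(e''),\widetilde{\alpha}(e)$ are linearly independent by the GKM condition, so comparing coefficients of $\widetilde{\alpha}(e'')$ yields the first identity, while comparing coefficients of $\widetilde{\alpha}(e)$ together with $\varepsilon(\overline{e})^{2}=1$ (applied componentwise) yields the second. The remaining case $e''=e$, where $\nabla_e e''=\overline{e}$, is a direct check from \eqref{eq:epsandcunique}: $\langle\nabla_e\varepsilon(e),\overline{e}\rangle=\langle\varepsilon(e),e\rangle=1=\langle\varepsilon(\overline{e}),\overline{e}\rangle$ and $\langle\nabla_e c(e),\overline{e}\rangle=-2=\langle\varepsilon(\overline{e})\cdot c(\overline{e}),\overline{e}\rangle$. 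Since $\str t(e)=\{\nabla_e e''\mid e''\in\str i(e)\}$, this covers all components. The only mild obstacle is the careful bookkeeping of signs and the componentwise $\mu_{2}\str v$-action on $\Z\str v$; once that is straightened out, the whole argument reduces to the involutive identity $\nabla_{\overline{e}}=(\nabla_e)^{-1}$ together with the linear independence guaranteed by GKM$_{2}$.
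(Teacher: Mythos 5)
Your proposal is correct and follows essentially the same route as the paper: apply \eqref{eq:epsandc} to the reversed edge $\overline{e}$ at the transported edge $\nabla_e e'$, use $\nabla_{\overline{e}}=(\nabla_e)^{-1}$ and $\widetilde{\alpha}(\overline{e})=-\widetilde{\alpha}(e)$, and compare coefficients via $2$-independence to get the two scalar identities that assemble into conditions $(2)$ and $(2')$. Your treatment is in fact slightly more careful than the paper's, since you handle the case $e''=e$ (where linear independence is unavailable) by a separate direct check against the normalization \eqref{eq:epsandcunique}.
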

	\begin{proof}
		The two conditions $(1)$ and $(1')$ are satisfied by definition. To prove the other two, we use~\eqref{eq:epsandc} as follows:
		\[
		\widetilde{\alpha}(e') = \widetilde{\alpha}(\nabla_{\overline{e}}\nabla_e e') = \langle \varepsilon(\overline{e}),\nabla_e e'\rangle \widetilde{\alpha}(\nabla_e e') + \langle c(\overline{e}),\nabla_ee'\rangle \widetilde{\alpha}(\overline{e}),
		\]
		which, comparing with~\eqref{eq:epsandc}, gives
		\[
		\langle \varepsilon(\overline{e}),\nabla_ee'\rangle =\langle \varepsilon(e),e'\rangle \quad\textrm{and} \quad \langle \varepsilon(\overline{e}),\nabla_e e'\rangle\langle c(\overline{e}),\nabla_e e'\rangle = \langle c(e),e'\rangle.
		\]
		This shows that 
		\[
		\varepsilon(\overline{e}) = \sum_{e'} \langle \varepsilon(\overline{e}),\nabla_e e'\rangle \nabla_e e' = \nabla_e \sum_{e'} \langle \varepsilon(e),e'\rangle e' = \nabla_e \varepsilon(e),
		\]
		i.e., $(2)$, and analogously
		\[
		\varepsilon(\overline{e})\cdot c(\overline{e}) = \sum_{e'}  \langle \varepsilon(\overline{e}),\nabla_e e'\rangle \langle c(\overline{e}),\nabla_e e'\rangle \nabla_ee' = \nabla_e \sum_{e'} \langle c(e),e'\rangle  e' = \nabla_e  c(e),
		\]
		i.e., $(2')$.
	\end{proof}
	
	\begin{defn}[\cite{gu-za-01}]
		\label{defn:abstractsignedgraph}
		An \emph{(abstract) (signed) GKM graph} $\Gamma$ consists of an abstract graph $(V,E)$, a connection $\nabla$ on $(V,E)$, as well as a \emph{(signed) axial function} $\alpha:E\to \Z^k$ that satisfy:
		\begin{enumerate}
			\item For each $e\in E$, $\alpha(\overline{e}) = -\alpha(e)$.
			\item For each $v\in V$ and $e,e'\in \str v$ with $e\neq e'$, $\alpha(e)$ and $\alpha(e')$ are linearly independent.
			\item The signed axial function $\alpha$ is \emph{compatible with the connection}, i.e., for all edges $e,e'$ at any vertex $v\in V$, 
			\[
			\alpha(\nabla_ee') \in \alpha(e') + \Z \alpha(e).
			\]
		\end{enumerate}
	\end{defn}
	In other words, a signed GKM graph is nothing but an unsigned GKM graph for which we were able to choose a lift of the unsigned axial function whose associated sign function $\varepsilon$ satisfies $\langle \varepsilon(e),e'\rangle = 1$ for all edges $e,e'$. The associated invariant function then satisfies $\nabla_e c(e) = c(\overline{e})$. Any proof of a statement for general unsigned graphs below will therefore automatically give a statement and its proof for signed graphs, by considering the special case that $\langle \varepsilon(e),e'\rangle = 1$ for all $e,e'$.
	
	\begin{defn}[\cite{gu-za-01}, \cite{go-ko-zo-22}]\label{def:gkmgr}
		Let $\Gamma$ be a GKM graph. It is called a \emph{GKM$_q$ graph} if its axial function $\alpha$ is \emph{$q$-independent}, i.e., if for any $v\in V$ and any pairwise different $e_{1},\dots,e_{q}\in \str v$ the vectors $\alpha(e_{1}),\dots,\alpha(e_{q})$ are linearly independent.
	\end{defn}
	Note that for a GKM$_3$ graph the connection $\nabla$ is determined uniquely by the graph and its axial function.
	
	\begin{defn}
		Consider a signed or unsigned GKM graph $\Gamma$, with axial function $\alpha:E\to \Z^k$ respectively $\alpha:E\to \Z^k/\pm 1$. The axial function $\alpha$, respectively the GKM graph $\Gamma$, is called \emph{effective} if the $\Z$-span of the image of $\alpha$ is all of $\Z^k$. It is called \emph{almost effective} if the $\Z$-span of the image of $\alpha$ is of finite index in $\Z^k$, i.e., a rank $k$ sublattice in $\Z^k$.
		The GKM graph $\Gamma$ is called an \textit{$(n,k)$-type GKM graph} if $\Gamma$ is $n$-valent and the axial function $\alpha:E\to \Z^k $ respectively $\alpha:E\to \Z^k/\pm 1$ is almost effective.
		An $(n,n)$-type (signed or unsigned, respectively) GKM graph is called a \textit{(signed or unsigned, respectively) torus graph}.
	\end{defn}
	
	\begin{defn}[\cite{gu-za-01}, \cite{go-ko-zo-22}]\label{defn:gkm}
		Let $M^{2n}$ be a smooth, closed, connected, orientable manifold with a smooth effective action of the compact torus $T=T^k=(S^1)^k$. 
		This action is called a \textit{GKM action} and $M^{2n}$ is called a \textit{GKM manifold}, if
		\begin{enumerate}[label=(\roman*)]
			\item The fixed point set $M^{T}$ is finite and non-empty;
			
			\item For any point $x\in M^{T}$ the weights of the isotropy representation of $T$ on $T_{x} M$ are pairwise linearly independent;
			
			\item (\textit{Equivariant formality}) One has $H^{odd}(M;\Z)=0$ for the respective singular cohomology groups. 
		\end{enumerate}
		In case $n=k$, a manifold $M$ satisfying items $(i)$ and $(ii)$ is called a \textit{torus manifold}. 
	\end{defn}
	
	\begin{rem}
		The proof of Theorem~\ref{thm:3} (see below) requires the assumption of equivariant formality over $\Z$.
	\end{rem}
	
	As is well known, to a $T^k$-GKM manifold $M$ one may associate an abstract unsigned GKM graph, as follows: the assumptions on a GKM manifold imply that the one-skeleton 
	\[
	M_1:=\{p\in M\mid \dim T\cdot p\leq 1\}
	\]
	is a finite union of $T$-invariant $2$-spheres; one considers the graph $\Gamma$ whose vertices are the fixed points and whose edges correspond to these invariant $2$-spheres, with labels given by the corresponding weights of the isotropy representations.  (After having fixed an isomorphism between the weight lattice in ${\mathfrak{t}}^*$ and $\Z^k$). In case the GKM manifold $M$ admits an invariant almost complex structure, the weights of the isotropy representations are well-defined elements in $\Z^k$, not only in $\Z^k/\pm 1$. 
	
	The fact that this labelled graph admits a compatible connection can be found e.g. in~\cite{gu-za-01}; also see~\cite[Prop.\ 2.3]{go-wi-22}.
	
	\section{Faces and orbit spaces of GKM manifolds}\label{sec:lattices}
	
	This section contains several known facts on faces of GKM manifolds, GKM graphs and orbit spaces, then proceeds with the proof of Theorem~\ref{thm:mainthmham}.
	
	\subsection{Definitions and basic facts}
	
	\begin{defn}[\cite{ay-ma-23}]\label{def:facesubm}
		For any $T$-GKM manifold $M$ and any $i=0,\ldots,\dim T$ let $M_{i}=\lb x\in M\mid \dim Tx\leq i\rb$ be the \textit{equivariant $i$-skeleton} of the $T$ manifold $M$.
		For the natural projection $p\colon M\to Q:=M/T$ let $Q_{i}:=p(M_{i})$.
		A \textit{face of the orbit space} is the closure $F$ of any connected component of $Q_{i}\setminus Q_{i-1}$ such that $F$ contains at least one vertex (i.e. preimage of a fixed point in $M$) of $Q$.
		Let $\rk F:=i$ be the \textit{rank}  of $F$.
		A \textit{face submanifold} $M_{F}\subset M_{i}$, $i=\rk F$, of a GKM manifold $M$ is defined to be the preimage $p^{-1}(F)$ for a face $F$ of $Q$.
	\end{defn}
	
	\begin{defn}[\cite{gu-za-01}, \cite{ay-ma-so-23}]\label{defn:gfac}
		A \textit{face $\Theta$ of a GKM graph $\Gamma$} is a connected subgraph whose edges are invariant with respect to the connection of $\Gamma$ along any edge path in $\Theta$. A face of a GKM graph is necessarily a $k$-valent subgraph, for some $k$; we speak about a \emph{$k$-face} of $\Gamma$. An $(n-1)$-face $\Theta$ of an $n$-valent GKM graph $\Gamma$ is called a \textit{facet} of $\Gamma$.
	\end{defn}
	
	In what follows by a face we mean either a face submanifold, face of an orbit space or a face of a GKM graph, whenever it is clear from the context.
	By the following lemma, the condition on the existence of a fixed point (i.e.\ of a vertex of a face) in Definition~\ref{def:facesubm} is superfluous for GKM manifolds.
	(The claim of this lemma is a particular case of~\cite[Lemma 2.2]{ma-pa-06} in the case of GKM manifolds, specifically, having only finitely many $T$-fixed points.)
	
	\begin{lm}[{\cite[Lemma 2.2]{ma-pa-06}}]\label{lm:efface}
		Any closure $X$ of any connected component of the set $M_{i}\setminus M_{i-1}$ of a GKM $T$-manifold $M$ is a smooth $T$-invariant submanifold, where $i\leq \rk T$. The $T$-action on $X$ is again equivariantly formal, with finite nonempty fixed point set (see Definition~\ref{defn:gkm}). 
	\end{lm}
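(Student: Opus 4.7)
The strategy is to identify $X$ with a connected component of the fixed-point set $M^K$ of a suitable subtorus $K\subset T$, after which the claims reduce to standard facts about equivariantly formal torus actions on compact manifolds.

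To identify $K$: at any point $x\in M_i\setminus M_{i-1}$ the identity component of the stabilizer $T_x$ is a subtorus of codimension $i$ in $T$, and by local constancy of isotropy types this subtorus is the same, say $K$, throughout any given connected component $X^\circ$ of $M_i\setminus M_{i-1}$. Hence $X=\overline{X^\circ}\subseteq M^K$. Conversely, the connected component $Y$ of $M^K$ containing $X^\circ$ has an open dense subset of points whose stabilizer identity component is exactly $K$, its complement in $Y$ being a finite union of closed submanifolds of real codimension at least two (namely fixed loci of tori properly containing $K$). These points lie in $M_i\setminus M_{i-1}$ and, by connectedness of this open dense subset, all lie in $X^\circ$. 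Therefore $X^\circ$ is dense in $Y$, so $X$ coincides with the entire connected component $Y$ of $M^K$, which is a smooth closed $T$-invariant submanifold.

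For the $T$-action on $X$, finiteness of $X^T$ is immediate as $X^T\subseteq M^T$. For non-emptiness together with equivariant formality I would use Morse theory: choose a generic $\xi\in\mathrm{Lie}(T)$ with $M^\xi=M^T$ and construct a $T$-invariant Morse function $f\colon M\to\R$ whose critical set equals $M^T$ and whose Morse index at each fixed point $p$ equals $2\cdot|\{j:\langle\alpha_j(p),\xi\rangle<0\}|$, hence even. The restriction $f|_X$ is a smooth function on the compact manifold $X$ whose critical set is $X\cap M^T=X^T$; at each $p\in X^T$ the tangent space $T_pX\subseteq T_pM$ is the sum of the weight spaces whose weight $\alpha_j(p)$ vanishes on $\mathrm{Lie}(K)$, so the Morse index of $f|_X$ at $p$ counts precisely those weights negative on $\xi$, again an even integer. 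Compactness of $X$ forces critical points, yielding $X^T\ne\emptyset$; the even-index property produces a CW decomposition of $X$ with only even-dimensional cells, so $H^{\mathrm{odd}}(X;\Z)=0$ and $\sum_i b_i(X)=|X^T|$, which together amount to equivariant formality of the $T$-action on $X$.

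The main obstacle I expect is the construction of the global $T$-invariant Morse function $f$ with the prescribed critical set and parity of indices. In the Hamiltonian case this is classical via the $\xi$-component of the moment map, while in general one has to patch locally defined $\xi$-moment functions near each fixed point using a $T$-invariant Riemannian metric and an invariant partition of unity, after which the remainder of the argument is formal.
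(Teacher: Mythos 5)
The paper does not prove this lemma itself; it is quoted from \cite[Lemma 2.2]{ma-pa-06}, whose argument is purely cohomological. Your first step --- identifying $X$ with a full connected component of $M^K$ for the subtorus $K$ given by the common identity component of the stabilizers on $X^\circ$, using that the locus of larger stabilizers has codimension at least two --- is correct and is exactly how one reduces to the cited statement. The problem lies entirely in the Morse-theoretic second half.

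There are two genuine gaps there. First, the existence of a $T$-invariant Morse function on a general (non-Hamiltonian) GKM manifold with critical set exactly $M^T$ and all indices even is not established, and your proposed fix does not produce one: patching local $\xi$-moment functions with an invariant partition of unity will in general create critical points in the overlap regions, and any critical point of an invariant function lying off $M^T$ comes with its whole (positive-dimensional) orbit, so it is automatically degenerate and the function is not Morse; nor is there any control of indices there. The hypothesis available to you is only $H^{\mathrm{odd}}(M;\Z)=0$, which is a \emph{consequence} of having such a function, not a means of constructing it --- in the Hamiltonian case the moment map supplies it, which is precisely why that case is special. Second, even granting such an $f$ on $M$, your claim that the critical set of $f|_X$ equals $X\cap M^T$ is unjustified: restricting a function to a submanifold creates new critical points (one only has $\mathrm{Crit}(f)\cap X\subseteq\mathrm{Crit}(f|_X)$, not the reverse), and an invariant function on $X$ can perfectly well have critical orbits outside $X^T$. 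The standard route, and the one behind the cited lemma, avoids Morse theory entirely: having identified $X$ with a component of $M^K$, one combines $\chi(M^K)=\chi(M)$ with the inequality $\sum_i \dim H^i(M^K;\Q)\leq \sum_i \dim H^i(M;\Q)$ coming from Borel localization for the equivariantly formal action on $M$; since $H^{\mathrm{odd}}(M)=0$ forces equality throughout, one gets $H^{\mathrm{odd}}(M^K)=0$, and a further localization argument gives the non-emptiness of the fixed point set in each component and the integral statement. You should either reproduce that argument or simply invoke \cite[Lemma 2.2]{ma-pa-06} as the paper does.
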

	
	The following claim is straightforward to prove.
	\begin{lm}
		Let $\Gamma$ be any $\GKM_{3}$ graph.
		Then any two edges with a common vertex belong to a $2$-face of $\Gamma$.
	\end{lm}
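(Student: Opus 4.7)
The plan is to build the desired $2$-face through the common vertex $v$ by ``propagating'' the two given edges $e_1,e_2\in\str v$ using the connection, and to use the $3$-independence of the axial function as the mechanism that forces the resulting subgraph to be $2$-valent.

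Concretely, let $P\subset\Z^k\otimes_{\Z}\Q$ denote the $2$-dimensional $\Q$-subspace spanned by lifts of $\alpha(e_1)$ and $\alpha(e_2)$; by item $(2)$ of Definition~\ref{defn:abstractunsignedgraph}, these lifts really are linearly independent. I would then define $\Theta$ recursively: set $\Theta_0=\{e_1,e_2,\overline{e_1},\overline{e_2}\}$, and given $\Theta_k$, let $\Theta_{k+1}$ consist of $\Theta_k$ together with $\nabla_e(e')$ and its reverse for every pair of edges $e,e'\in\Theta_k$ meeting at a common vertex. Let $\Theta=\bigcup_{k\geq 0}\Theta_k$, viewed as a subgraph whose vertex set is $\{i(e)\mid e\in\Theta\}$. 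This subgraph is connected and, by construction, invariant under the connection along its own edges, so once I verify that it is $2$-valent it will be a $2$-face in the sense of Definition~\ref{defn:gfac} containing both $e_1$ and $e_2$.

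The first technical step is to show that every edge $e$ of $\Theta$ satisfies $\pm\widetilde{\alpha}(e)\in P$. This goes by induction on $k$: for $k=0$ it is immediate, and the inductive step uses the compatibility relation~\eqref{eq:epsandc}, which gives $\widetilde{\alpha}(\nabla_e e')\in\pm\widetilde{\alpha}(e')+\Z\widetilde{\alpha}(e)\subset P$ whenever $e,e'\in\Theta_k$ share a vertex. The $\GKM_3$ assumption then yields the crucial upper bound: at any vertex $u$ touched by $\Theta$, at most two edges of $\str u$ can have axial function in the $2$-plane $P$, since any three edges of $\str u$ have linearly independent axial functions. In particular, $\Theta$ is at most $2$-valent at every vertex.

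The main obstacle, and the last step, is to show that $\Theta$ is at least $2$-valent at every vertex, i.e., that no vertex ends up as an ``endpoint''. I would prove this by induction on the step $k$ at which a vertex first appears in $\Theta$. The base case is $v$, where $\{e_1,e_2\}\subset\Theta_0$. For the inductive step, suppose a new vertex $u$ is added when processing an edge $e\in\Theta_k$ with $t(e)=u$ and $i(e)=u'$ an earlier vertex; then $\overline{e}\in\Theta_{k+1}$ provides one edge of $\Theta$ at $u$. By induction there exists a second edge $e'\in\Theta_k$ at $u'$, distinct from $e$, and then $\nabla_e(e')\in\Theta_{k+1}$ lies at $u$. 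Because $\nabla_e$ is a bijection with $\nabla_e(e)=\overline{e}$ and $e'\neq e$, we have $\nabla_e(e')\neq\overline{e}$, so $u$ carries at least two edges of $\Theta$. Combined with the upper bound from the previous paragraph, this gives $2$-valence at $u$ and completes the proof.
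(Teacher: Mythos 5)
Your argument is correct; the paper itself omits the proof, noting only that the claim is straightforward, and what you write is essentially the expected argument: all labels of edges produced by the closure construction stay (up to sign) in the plane $P$ by the congruence relation, and $3$-independence caps the valence of the resulting connection-invariant subgraph at $2$. Two small remarks. First, your lower-bound induction is more work than needed: by Definition~\ref{defn:gfac} a connected, connection-invariant subgraph automatically has constant valence (the maps $\nabla_e$ restrict to bijections between the stars of $\Theta$ at the endpoints of $e$), so it suffices to observe that the valence at the base vertex $v$ is at least $2$ because $e_1,e_2\in\Theta_0$. Second, if you do keep the induction, the base case should also cover $t(e_1)$ and $t(e_2)$, which are vertices of $\Theta_0$ via $\overline{e_1},\overline{e_2}$ but initially carry only one edge each (they acquire a second edge, e.g.\ $\nabla_{e_1}(e_2)$ at $t(e_1)$, only at step $1$); alternatively, induct on the distance from $v$ inside $\Theta$, which makes the bookkeeping cleaner. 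Neither point is a genuine gap.
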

	
	Recall that a topological space $X$ is called \textit{$n$-acyclic} if $\widetilde{H}^{i}(X;\Z)=0$ holds for any $i\leq n$.
	
	\begin{thm}[{\cite[Thm. 2]{ay-ma-23}, \cite[Prop. 3.11 (4)]{ay-ma-so-23}}]\label{thm:ams}
		For a $\GKM_{j}$ manifold $M$ with orbit space $Q$, the space $Q_{i}$ is $\min\lb i-1, j+1\rb$-acyclic for any natural number~$i$. 
	\end{thm}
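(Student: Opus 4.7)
The plan is to proceed by induction on $i$, using the local structure of $Q_i$ near a fixed point as the main geometric input. The base case $i=0$ is immediate since $Q_{0}$ is finite and nonempty, so $\widetilde{H}^{k}(Q_0)=0$ for $k\le-1$.

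For the inductive step, the analysis localizes at each vertex of $Q_0$. The isotropy representation at the corresponding fixed point decomposes as $\bigoplus_{w\in W} \mathbb{C}_w$ along the set $W$ of weights at that point, and $M_i$ near it is the union $\bigcup_I \bigoplus_{w\in I} \mathbb{C}_w$ taken over those subsets $I\subseteq W$ whose weights span an $\R$-subspace of dimension at most $i$. The GKM$_j$ assumption forces every subset of size at most $j$ to be linearly independent, so for $i \le j$ the local model of $M_i$ is exactly the union of all coordinate subspaces of complex dimension at most $i$. Intersecting with a small invariant sphere and passing to the $T$-quotient yields the link of the vertex in $Q_i$; a direct combinatorial analysis shows this link is $(\min(i-2, j))$-acyclic, essentially because the simplicial complex of participating coordinate subspaces contains the $(\min(i,j)-1)$-skeleton of the full simplex on $W$.

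One then covers $Q_i$ by the open cones at the fixed vertices together with an open regular neighborhood of $Q_{i-1}$ that deformation retracts onto $Q_{i-1}$; the pairwise intersections are homotopy equivalent to the links just described. A Mayer--Vietoris argument, combined with the inductive hypothesis on $Q_{i-1}$ and the link acyclicity, produces the desired bound $\widetilde{H}^{k}(Q_i)=0$ for $k\le \min(i-1, j+1)$. Equivalently, one could set up the spectral sequence of the filtration $Q_0 \subset Q_1 \subset \cdots \subset Q$ and use the same local description to compute the low-dimensional rows.

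The main obstacle I expect is the link computation when $i > j$, where the acyclicity bound plateaus at $j+1$ and one no longer has all coordinate subspaces of dimension at most $i$ in the local model. Here the contribution of large subsets $I \subseteq W$, whose $\R$-rank may strictly drop below $|I|$, must be controlled using only the weak information that \emph{any} $j$ of the weights are linearly independent. Turning this into precise Mayer--Vietoris bookkeeping, and in particular verifying that the acyclicity range is sharp in both the $i\le j+2$ and $i>j+2$ regimes, is the technical core of the argument.
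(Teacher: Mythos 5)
This theorem is not proved in the paper at all: it is imported verbatim from \cite{ay-ma-23} and \cite{ay-ma-so-23}, so there is no internal argument to compare yours against. Judged on its own terms, your proposal has a genuine gap: nowhere do you use equivariant formality ($H^{odd}(M;\Z)=0$), which is part of the definition of a GKM manifold here and is indispensable for the statement. The $j$-independence of the isotropy weights is a purely local condition, and your entire argument (local models at fixed points, links, Mayer--Vietoris) only uses local data; such an argument cannot control the global cohomology of $Q_i$. You can see where it breaks by running your own induction carefully. The local analysis gives, at best, $H^k(Q_i,Q_{i-1})=0$ for $k\le i-1$ (equivalently, your links are $(i-2)$-acyclic but emphatically not $(i-1)$-acyclic: for $i\le j$ the link at a vertex is the $(i-1)$-skeleton of a simplex, which has large $H^{i-1}$). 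Feeding this into the long exact sequence of the pair together with the inductive hypothesis that $Q_{i-1}$ is $\min(i-2,j+1)$-acyclic yields only that $Q_i$ is $\min(i-2,j+1)$-acyclic --- one degree short of the claim. To gain the top degree $k=i-1$ one must show that the connecting map $H^{i-1}(Q_{i-1})\to H^{i}(Q_i,Q_{i-1})$ is injective, and this is a global exactness statement in the spirit of the Chang--Skjelbred/Atiyah--Bredon sequences; it is exactly where equivariant formality (and the fact, as in Lemma~\ref{lm:efface}, that all face submanifolds are again equivariantly formal, so one can induct over faces) enters in \cite{ay-ma-23}. Your proposal misidentifies the technical core as the combinatorics of degenerate weight subsets when $i>j$; the real obstruction is already present for every $i$, including $i=1$ (connectedness of the GKM graph) and $i=2$ ($H^1(Q_2)=0$, the very fact underlying Corollary~\ref{cor:perfg}), neither of which follows from the local weight structure alone --- indeed the paper notes that the analogous conclusions fail for abstract GKM graphs.

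A secondary, fixable issue: your open cover is not well posed as stated. A regular neighborhood of $Q_{i-1}$ already contains neighborhoods of all vertices (since $Q_0\subset Q_{i-1}$), so the cones at fixed points do not contribute a genuinely new piece; the correct decomposition is by the closures of the components of $Q_i\setminus Q_{i-1}$ (the rank-$i$ faces) attached to $Q_{i-1}$ along their boundaries, which again forces you to understand each face's orbit space globally, not just near the fixed points.
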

	
	The following corollary will be used in the proof of Theorem~\ref{thm:comp1} below.
	
	\begin{cor}\label{cor:perfg}
		For any $\GKM_{3}$ manifold the group $\pi_{1}(Q_{2})$ is perfect. 
	\end{cor}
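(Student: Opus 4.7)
The plan is a direct two-step deduction: use Theorem~\ref{thm:ams} to establish acyclicity of $Q_2$, then invoke the Hurewicz theorem. Applying Theorem~\ref{thm:ams} with $i=2$ and $j=3$ gives that $Q_2$ is $\min\{1,4\}=1$-acyclic, so in particular path-connected and with trivial first integer cohomology. The underlying result of~\cite{ay-ma-23} (and~\cite{ay-ma-so-23}) in fact yields the corresponding integral homology vanishing $H_1(Q_2;\Z)=0$ as well; this is the form of the acyclicity statement to cite.

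With $H_1(Q_2;\Z)=0$ in hand, the Hurewicz theorem applied to the path-connected space $Q_2$ identifies $H_1(Q_2;\Z)$ with the abelianization of $\pi_1(Q_2)$. The vanishing therefore forces $\pi_1(Q_2)^{\mathrm{ab}}=0$, which is exactly the statement that $\pi_1(Q_2)=[\pi_1(Q_2),\pi_1(Q_2)]$, i.e.\ $\pi_1(Q_2)$ is perfect.

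The main obstacle lies in this bookkeeping step: the cohomological formulation of $1$-acyclicity given in Section~\ref{sec:lattices}, combined with universal coefficients, yields only that $H_1(Q_2;\Z)$ is a torsion group (since $\Hom(H_1(Q_2;\Z),\Z)=0$), which is not by itself enough for perfectness. I would resolve this by appealing to the homological form of~\cite[Thm.~2]{ay-ma-23} directly, which is in any case what is really established in that proof, rather than trying to derive integral homology vanishing from the cohomological statement alone (e.g.\ via a CW-type decomposition of $Q_2$ with the graph $Q_1$ as $1$-skeleton and the $2$-faces of $\Gamma$ providing the attaching $2$-cells).
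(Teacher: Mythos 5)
Your proof is correct and follows essentially the same route as the paper, whose entire argument is the one-line observation that by Theorem~\ref{thm:ams} the abelianization $H_1(Q_2)$ of $\pi_1(Q_2)$ vanishes, so that perfectness follows from Hurewicz. Your additional point of care --- that the cohomological formulation of $1$-acyclicity in Theorem~\ref{thm:ams} by itself only forces $H_1(Q_2;\Z)$ to be torsion via universal coefficients, so one should invoke the homological form of the result in~\cite{ay-ma-23} --- is legitimate and is silently elided in the paper's proof.
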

	\begin{proof}
		Indeed, by Theorem~\ref{thm:ams}, the abelianization $H_{1}(Q_{2})$ of $\pi_{1}(Q_{2})$ is trivial.
	\end{proof}
	
	\begin{defn}\label{defn:conjugated2face}
		Let $\Gamma$ be a GKM$_3$ graph, and $v$ any fixed vertex. We call the class $g^{-1}g' g$ in $\pi_{1}(\Gamma,v )$ a \textit{conjugated $2$-face} of $\Gamma$, where $g$ is any edge path in $\Gamma$ with $i(g)=v$ and $g'$ is any loop with $i(g')=t(g)$ following the edge path of a $2$-face in $\Gamma$.
	\end{defn}
	
	\subsection{Proof of Theorem~\ref{thm:mainthmham}}
	
	\begin{defn}
		For any graph $(V,E)$ with an \textit{orientation of the edges}, i.e. any function $o\colon E\to \lb \pm 1\rb$ with $o(\overline{e})=-o(e)$, the number of negative values of $o$ on $\str v$ is called the \textit{index of a vertex} $v$ in $(V,E)$ with respect to $o$.
		The $2i$-th \textit{Betti number $b_{2i}((V,E),o)$ of the graph $(V,E)$ with orientation $o$ of edges} is defined as the number of vertices in $(V,E)$ of index $i$ with respect to $o$.
		We will call any edge loop in $(V,E)$ a \emph{cycle}. A cycle consisting of only positively or only negatively oriented edges with respect to $o$ is called an \textit{$o$-oriented cycle}. 
		An orientation $o$ of edges in the graph $(V,E)$ is called \textit{acyclic} (this condition is called the \textit{no-cycle condition} in~\cite{gu-za-01}) if the respective graph admits no $o$-oriented cycles. 
	\end{defn}
	
	\begin{rem}
		Suppose $\Gamma$ is a signed GKM graph, with axial function $\alpha\colon E\to\Z^{k}$.
		For a generic vector $\xi\in(\Z^{k})^{*}$ the pairing $\la\alpha(e),\xi\ra$ is nonzero and induces an orientation $o=o_{\xi}$ \cite[\S 1.4]{gu-za-01}.
		The respective numbers $b_{2i}(\Gamma,o_{\xi})$ are called combinatorial Betti numbers and denoted by $b_{2i}(\Gamma)$ in \cite[\S 1.4]{gu-za-01}.
		As the notation suggests, $b_{2i}(\Gamma)$ does not depend on the choice of $\xi\in(\Z^{k})^{*}$.
		Furthermore, in the Hamiltonian setting $\Gamma=\Gamma(M)$, one has $b_{2i}(\Gamma)=b_{2i}(M)$ by \cite[Formula (0.4)]{gu-za-01}, which explains the nomenclature.
		For an arbitrary (acyclic) orientation $o$ on edges of $\Gamma$, $b_{2i}(\Gamma,o_{\xi})$ depends on the choice of $o$, in general (see Fig.~\ref{fig:difbn}), and such numbers do not relate to topological Betti numbers.
	\end{rem}
	
	\begin{figure}[h]
		\begin{center}
			\begin{tikzpicture}[node distance=3cm, thick,main node/.style={circle,draw,font=\sffamily\Large\bfseries}]
				\fill (0,0) coordinate (a) circle (2pt);
				\fill (1,0) coordinate (b) circle (2pt);
				\fill (1,1) coordinate (c) circle (2pt);
				\fill (0,1) coordinate (d) circle (2pt);
				\draw[->,>=stealth'] (a)--(b);
				\draw[->,>=stealth'] (b)--(c);
				\draw[->,>=stealth'] (d)--(c);
				\draw[->,>=stealth'] (a)--(d);
			\end{tikzpicture}\qquad
			\begin{tikzpicture}[node distance=3cm, thick,main node/.style={circle,draw,font=\sffamily\Large\bfseries}]
				\fill (0,0) coordinate (a) circle (2pt);
				\fill (1,0) coordinate (b) circle (2pt);
				\fill (1,1) coordinate (c) circle (2pt);
				\fill (0,1) coordinate (d) circle (2pt);
				\draw[->,>=stealth'] (a)--(b);
				\draw[->,>=stealth'] (c)--(b);
				\draw[->,>=stealth'] (c)--(d);
				\draw[->,>=stealth'] (a)--(d);
			\end{tikzpicture}
			\caption{The same GKM graph with two different acyclic edge orientations providing different fourth Betti numbers (i.e. numbers of vertices that are local maxima with respect to a given orientation)}
			\label{fig:difbn}
		\end{center}
	\end{figure}

	\begin{thm}\label{thm:hamcase}
		Let $\Gamma$ be any unsigned $\GKM_{3}$ graph with an acyclic orientation $o\colon E\to \lb \pm 1\rb$ of edges such that $b_{4}(F,o)=1$ holds for any $2$-face $F$ in $\Gamma$.
		Then the conjugated $2$-faces in $\Gamma$ generate the fundamental group $\pi_{1}(\Gamma)$.
	\end{thm}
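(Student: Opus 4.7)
The plan is to derive the theorem from a Morse-style reduction on loops, governed by the acyclic orientation $o$: after suitable local moves, any edge loop based at a carefully chosen vertex can be reduced to the constant loop modulo conjugated $2$-faces.

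First I would fix a linear extension $h\colon V\to \Z$ of the partial order induced by $o$ (so that $h(t(e))>h(i(e))$ whenever $o(e)=+1$), and take the base vertex $v_0$ to be the unique $h$-minimum. This choice is harmless: the set of conjugated $2$-faces is closed under conjugation, so the subgroup they generate in $\pi_1(\Gamma)$ is normal, and whether that subgroup coincides with $\pi_1(\Gamma)$ is independent of the base point.

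The central step is the following local move. Let $\ell=e_1\cdots e_k$ be an edge loop at $v_0$, and suppose that at an interior vertex $u=t(e_j)=i(e_{j+1})$ we have $h(i(e_j))<h(u)$ and $h(t(e_{j+1}))<h(u)$, with $\overline{e}_j\ne e_{j+1}$. Then $\overline{e}_j$ and $e_{j+1}$ are two distinct edges in $\str u$ both carrying $o$-value $-1$, and by the preceding lemma on $\GKM_3$ graphs they lie in a common $2$-face $F$. Inside $F$ the vertex $u$ has index $2$, so the hypothesis $b_4(F,o)=1$ forces $u$ to be the unique local maximum of $F$. Consequently the arc $\beta$ of $F$ from $i(e_j)$ to $t(e_{j+1})$ not passing through $u$ visits only vertices of height strictly below $h(u)$. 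Replacing the subword $e_je_{j+1}$ in $\ell$ by $\beta$ produces a new loop $\ell'$, and a direct check shows that $\ell\cdot(\ell')^{-1}$ equals the loop $F$ based at $i(e_j)$, conjugated by the initial segment of $\ell$ from $v_0$ to $i(e_j)$, which is a conjugated $2$-face in the sense of Definition~\ref{defn:conjugated2face}.

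With this move in hand, I would iterate. While $\ell$ visits a vertex of height greater than $h(v_0)$, choose a position where $h$ attains its maximum along $\ell$; since $h$ takes distinct values on adjacent vertices, the two edges of $\ell$ at that position lead to vertices of strictly smaller $h$-value, so the position is automatically a strict local maximum. If its two incident edges happen to be inverses of each other, cancel them freely in $\pi_1$; otherwise apply the local move above. In either case the lexicographic pair (maximum height attained by $\ell$, number of positions attaining it) strictly decreases, so the process terminates. At termination $\ell$ attains its maximum at height $h(v_0)$, and since $v_0$ is the unique vertex of minimum height, $\ell$ is the constant loop. Collecting the conjugated $2$-faces introduced along the way then expresses the original $[\ell]\in \pi_1(\Gamma,v_0)$ as a product of conjugated $2$-faces. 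The main obstacle I anticipate is the clean bookkeeping of signs and orientation conventions in the local move -- identifying $F$ correctly from the index-$2$ datum at $u$, and confirming that $\beta$ really stays below $h(u)$, which is exactly where $b_4(F,o)=1$ enters; once this is pinned down, the rest is a routine termination argument.
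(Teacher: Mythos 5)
Your proposal is correct and follows essentially the same route as the paper's proof: a height function from a linear extension of the order induced by $o$, the observation that $b_4(F,o)=1$ forces the top vertex of the loop to be the unique (hence global) maximum of the $2$-face spanned by its two descending edges, a local move replacing the two-edge subpath by the opposite arc of that face at the cost of one conjugated $2$-face, and termination by induction on the pair (maximal height, multiplicity). The only cosmetic differences are that you explicitly treat backtracking subwords by free cancellation where the paper treats the biangle case, and that you note the normality of the subgroup generated by conjugated $2$-faces; neither changes the substance.
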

	\begin{proof}
		The condition that the graph has no oriented cycles with respect to the chosen orientation of the edges implies that the orientation $o$ induces a partial order $\leq_{o}$ on the vertices of $\Gamma$, i.e., $u\leq_{o} v$ if and only if $u=v$ or there is an edge path $e_1,\ldots,e_k$ connecting $u$ and $v$ with $o(e_i)=1$ for all $i$.
		
		Choose any total order extending this partial order.
		Using this extension we can define an injective function $f\colon V\to \N$ on the vertices of $\Gamma$ by consecutive integers from $1$ to $N$ satisfying that for every edge $e$ with $o(e)=1$, i.e., $i(e)<_{o} t(e)$, we have $f(i(e))< f(t(e))$. 
		
		By the assumption on the Betti numbers of the $2$-faces, any $2$-face $F$ of $\Gamma$ has a unique locally maximal vertex with respect to the induced order on $F$.
		We say that a cycle $g$ in $\Gamma$ has \textit{height $h$ and multiplicity $m$} if the maximum of $f$ on $V(g)$ equals $h$ and is attained $m$ times on a vertex of $g$.
		
		Given any cycle $g$ at the least vertex $v_0$ of $\Gamma$, we wish to decompose $g$ into a product of conjugated $2$-faces.
		This is done by double induction on height and multiplicity of $g$.
		The double induction is done with respect to the lexicographical order $\leq_{lex}$ on $(h,m)\in\N\times\N$.
		The base of the induction is $(h,m)=(1,1)$.
		In this case, the edge path $g$ is constant and there is nothing to prove.
		Assume that the claim holds for $\leq_{lex}(h-1,m)$ and $\leq_{lex}(h,m-1)$.
		Let $g$ be an edge path with height and multiplicity $(h,m)$.
		Let $v$ be the maximal vertex of $g$, and consider one of its occurrences, with adjacent edges $uv$, $vw$.
			
		Let $F$ be the $2$-face spanned by $uv$ and $vw$. The uniqueness of a local maximum on $F$ implies that any vertex $z \neq v$ of $F$ satisfies $z<_ov$ ($v$ is the maximal vertex of $F$).
		For $h_2 :=  vw\cdot uv$ choose $f_2$ so that $f_{2}^{-1}h_2$ is the simple loop at $u$ bounding $F$. Note that in case $u=w$ the face $F$ is a biangle, and the path $f_2$ is constant.
		Then we can decompose $g = h_3 h_2 h_1$ for certain paths $h_1$ and $h_3$, and write
		\[
		g = g_2 g_1 ,\quad \textrm{where}\quad       g_1:=h_1^{-1} f_2^{-1} h_2 h_1 \textrm{ and } g_2:=h_3 f_2 h_1.
		\]
		Notice that $g_1$ is a conjugated $2$-face corresponding to $F$, and $g_2$ is a path with lower multiplicity or height, as it traverses $F$ from the other side avoiding $v$, see Fig.~\ref{fig:indmor}. 
		Therefore, $g_{2}$ decomposes by the induction assumption.
		Hence, $g$ decomposes as well.
		This completes the induction step, and the proof is complete.
		
		\begin{figure}[h]
			\begin{center}
				\begin{tikzpicture}[node distance=3cm, thick,main node/.style={circle,draw,font=\sffamily\Large\bfseries}]
					\fill (0,1) coordinate (v) circle (2pt);
					\fill (-1,0) coordinate (u) circle (2pt);
					\fill (1,0) coordinate (w) circle (2pt);
					\fill (-1,-1) coordinate (z) circle (2pt);
					\fill (0,-2) coordinate (v0) circle (2pt);
					\fill (1,-1) coordinate (zr);
					\fill (-2,-1) coordinate (zl);
					\fill (2,-1) coordinate (zrr);
					\fill (-1.5,0.5) coordinate (vl);
					\fill (0,1) coordinate (vr);
					\node[above] at (0,1) {$v$};
					\node[right] at (-1,0) {$u$};
					\node[left] at (1,0) {$w$};
					\node[right] at (-1,-1) {$z$};
					\node[below] at (0,-2) {$v_{0}$};
					\draw[->,>=stealth'] (u)--(v);
					\draw[->,>=stealth'] (v)--(w);
					\draw[dashed] (u) to [out=210,in=150] (z);
					\draw[dashed,->,>=stealth'] (z) to [out=330,in=210] (zr);
					\draw[dashed] (zr) to [out=30,in=330] (w);
					\draw[->,>=stealth'] (v0)
					to [out=180,in=270] (zl);
					\draw (zl)
					to [out=90,in=180] (vl) 
					to [out=0,in=120] (u);
					\draw[->,>=stealth'] (w)
					to [out=60,in=0] (vr)
					to [out=0,in=90] (zrr);
					\draw (zrr) to [out=270,in=0] (v0);
					\node[below] at (-1.5,1) {$g$};
					\node at (0,-1) {$f_{2}$};
					\node[left] at (-2,-0.5) {$h_{1}$};
					\node[right] at (2,-0.5) {$h_{3}$};
					\node at (0,-0.25) {$F$};
					\node at (0,0.5) {$h_{2}$};
				\end{tikzpicture}
				\caption{The induction step replaces the edge path $u,v,w$ with the (dashed) edge path $u,z,w$ in a cycle $g$ with a maximal vertex $v$ of multiplicity $2$. The vertices $u,v,w,z$ belong to the same $2$-face $F$. The resulting cycle has the maximal vertex $v$ with multiplicity $1$}
				\label{fig:indmor}
			\end{center}
		\end{figure}
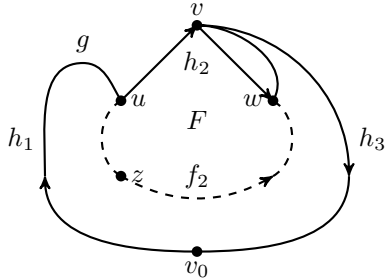
	\end{proof}
	
	\begin{rem}
		An injective function $f\colon V\to \N$ induces a total order on $V$; as used in the proof, any total order on $V$ is induced by such a function.
		An acyclic orientation $o$ of the edges induces the partial order $\leq_o$ on $V$ with the property that the two boundary vertices of any edge are comparable. In the proof above, we used the well-known fact that it is always possible to extend this partial order $\leq_{o}$ to a total order.
		Hence, in Theorem~\ref{thm:hamcase} we could equivalently assume that we are given a total order on $V$ with the property that the induced orientation $o$ on edges is acyclic and satisfies $b_{4}(F,o)=1$ for any $2$-face $F$ in $\Gamma$.
	\end{rem}
	
	In the particular case of a Hamiltonian $\GKM_{3}$ manifold we have the following corollary (Theorem \ref{thm:mainthmham} in the introduction):
	
	\begin{cor}\label{cor:hamcase}
		Let $\Gamma$ be the GKM graph of any Hamiltonian $\GKM_{3}$ manifold $M$.
		Then the conjugated $2$-faces in $\Gamma$ generate the fundamental group $\pi_{1}(\Gamma)$.
	\end{cor}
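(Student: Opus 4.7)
The strategy is to deduce Corollary~\ref{cor:hamcase} directly from Theorem~\ref{thm:hamcase} by equipping $\Gamma$ with an acyclic orientation $o$ obtained from a generic component of the moment map, and then verifying that every $2$-face $F$ of $\Gamma$ satisfies $b_{4}(F,o)=1$.

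Fix a moment map $\mu\colon M\to\mathfrak{t}^{*}$ for the Hamiltonian $T$-action, and pick $\xi\in\mathfrak{t}$ such that $\mu^{\xi}:=\langle\mu(\cdot),\xi\rangle$ separates the finite set $M^{T}$; this is possible since the excluded $\xi$ form a finite union of hyperplanes. Define $o\colon E\to\{\pm 1\}$ by $o(e)=1$ if and only if $\mu^{\xi}(t(e))>\mu^{\xi}(i(e))$. Since $\mu^{\xi}$ is strictly monotonic along any $o$-oriented edge path, no $o$-oriented cycle can exist, so $o$ is acyclic. For each $2$-face $F$ of $\Gamma$, let $M_{F}\subset M$ denote the associated $4$-dimensional face submanifold; by Lemma~\ref{lm:efface} it is compact, connected, equivariantly formal, and its fixed-point set equals the set of vertices of $F$. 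As a connected component of the fixed set of a subtorus of $T$ acting on a Hamiltonian $T$-manifold, $M_{F}$ is itself symplectic (hence orientable), and the restriction of $\mu$ makes the induced effective quotient-torus action on $M_{F}$ Hamiltonian.

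By the genericity of $\xi$, the restriction $\mu^{\xi}|_{M_{F}}$ is a Morse function whose critical set coincides with $M_{F}^{T}$, and its Morse index at a vertex $q\in V(F)$ equals twice the number of weights of the $T$-representation on $T_{q}M_{F}$ that are negative on $\xi$; the latter number is exactly the graph-theoretic index of $q$ in $F$ with respect to $o$. Equivariant formality (equivalently, perfection of the Morse function $\mu^{\xi}|_{M_{F}}$) then identifies the number of vertices of $F$ of index $2$ with $b_{4}(M_{F})$, which equals $1$ because $M_{F}$ is a connected, compact, oriented $4$-manifold. Hence $b_{4}(F,o)=1$, Theorem~\ref{thm:hamcase} applies, and the corollary follows. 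The only background input beyond classical Hamiltonian theory is the standard correspondence between $2$-faces of the GKM graph of a GKM$_{3}$ manifold and $4$-dimensional face submanifolds of $M$; this is the only mild subtlety I foresee, but it is well established.
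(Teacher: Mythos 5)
Your proposal is correct and follows the same skeleton as the paper's proof: equip $\Gamma$ with the acyclic orientation coming from a generic component $\mu^{\xi}$ of the moment map, check $b_{4}(F,o)=1$ for every $2$-face $F$, and invoke Theorem~\ref{thm:hamcase}. The only difference is that where the paper simply cites Guillemin--Zara (Theorems 1.4.2 and 1.4.4 of \cite{gu-za-01}) for these two facts, you re-derive them geometrically, via the correspondence between $2$-faces and $4$-dimensional face submanifolds $M_{F}$ and the perfection of $\mu^{\xi}|_{M_{F}}$ as a Morse function; this is essentially how the cited results are proved, so nothing is gained or lost beyond self-containedness.

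One small inaccuracy: you ask that $\mu^{\xi}$ separate \emph{all} of $M^{T}$ and justify this by saying the excluded $\xi$ form a finite union of hyperplanes. That justification only covers pairs of fixed points with distinct moment images; if $\mu(p)=\mu(q)$ for two non-adjacent fixed points, no $\xi$ separates them. Fortunately the full separation is never used: all you need is $\langle\alpha(e),\xi\rangle\neq 0$ for every edge $e$ (equivalently, $\mu^{\xi}$ distinguishes the two endpoints of each edge), which both makes $o$ well defined and acyclic and makes $\mu^{\xi}|_{M_{F}}$ Morse with critical set $M_{F}^{T}$, and this genuinely is a finite-union-of-hyperplanes condition. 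With that adjustment the argument is complete.
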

	\begin{proof} 
		A generic component of a momentum map of the action induces an acyclic orientation on $\Gamma$ by~\cite[Theorem 1.4.2]{gu-za-01}, which satisfies $b_{4}(F)=1$ for any $2$-face $F$ in $\Gamma$ by~\cite[Theorem 1.4.4]{gu-za-01}. Then the claim follows from Theorem~\ref{thm:hamcase}.
	\end{proof}
	
	\begin{rem}
		In this theorem it is sufficient to consider equivariant formality over $\Q$, which is automatic for Hamiltonian torus actions by Kirwan's result \cite[Prop. 5.8]{ki-84}.
	\end{rem}
	
	\begin{rem}
		Corollary~\ref{cor:hamcase} is not true in the GKM$_2$ case, and Theorem~\ref{thm:hamcase} does not hold without the assumption on the Betti numbers of the $2$-faces.
		For example, the standard $T^{2}$-action on the flag manifold $Fl_{3}={\mathrm{SU}}(3)/T^2$ has
		the GKM graph $\Gamma$ as in Figure~\ref{fig:Fl3}. As the action is Hamiltonian, a generic component of the momentum map induces an acyclic orientation on $\Gamma$. Equipped with the canonical connection of a homogeneous GKM manifold~\cite{gu-ho-za-06}, it has a $2$-face $F$ consisting of a $6$-cycle subgraph that satisfies $b_{4}(F)=2$.
		On the other hand, the minimal normal subgroup generated by conjugated $2$-faces of $\Gamma$ has index $2$ in $\pi_{1}(\Gamma)$. This follows for instance because gluing in a $2$-disc to each $2$-face of $\Gamma$ results in $\R P^2$, see~\cite[Example 3.13 (b)]{go-ko-zo-22'}.
		
		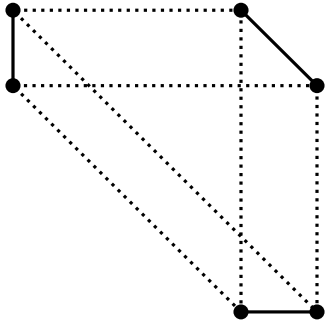
\begin{figure}[h]
			\begin{center}
				\begin{tikzpicture}
					\draw[very thick, dotted] (1,1) -- ++(-3,0) -- ++(4,-4) -- ++(0,3);
					\draw[very thick] (2,0) -- ++(-1,1);
					\draw[very thick] (2,-3)--++(-1,0);
					\draw[very thick, dotted] (1,-3)--++(0,4);
					\draw[very thick, dotted] (1,-3) -- ++(-3,3) -- ++ (4,0);
					\draw[very thick] (-2,0) -- ++(0,1);
					\node at (1,1)[circle,fill,inner sep=2pt]{};
					
					\node at (-2,1)[circle,fill,inner sep=2pt]{};
					
					\node at (1,-3)[circle,fill,inner sep=2pt]{};
					
					\node at (-2,0)[circle,fill,inner sep=2pt]{};
					
					\node at (2,0)[circle,fill,inner sep=2pt]{};
					
					\node at (2,-3)[circle,fill,inner sep=2pt]{};
					
				\end{tikzpicture}
				\caption{GKM graph of the flag manifold $Fl_3={\mathrm{SU}}(3)/T^2$. It has a $2$-face $F$ with $b_{4}(F)=2$, depicted by dotted lines}
				\label{fig:Fl3}
			\end{center}
		\end{figure}
	\end{rem}
	
	\section{Extension for unsigned GKM graphs}\label{sec:extension}
	
	In this section we generalize the group of axial functions, introduced by Kuroki~\cite{ku-19} for signed GKM graphs, to unsigned GKM graphs. We generalize the extension criterion for a signed GKM graph from~\cite{ku-19}, see Theorem~\ref{thm:kurrk} below. Recall from Section~\ref{sec:gkmbasic} that a constant sign function $\eps \equiv 1$ corresponds to the signed case; by allowing arbitrary sign functions we treat the signed and unsigned cases simultaneously.
	
	\subsection{Group of label functions} \label{subsec:labelfct}
	
	\begin{defn}\label{def:cong}
		Let $\Gamma$ be a graph with a connection $\nabla$, equipped with a sign function $\eps$ and an invariant function $c$ (cf.\ Definition \ref{defn:invfunc}). Define $L(\Gamma)$ to be the group of functions $a: E\to\Z$ satisfying 
		\begin{equation}\label{eq:afrel}
			a(\nabla_{e} e')=\la \eps(e),e'\ra\cdot a(e')+\la c(e),e'\ra\cdot a(e)
		\end{equation} 
		for all edges $e,e'$ at any vertex $v\in V$.
		We call $L(\Gamma)$ the \textit{group of label functions} for $\Gamma$.
	\end{defn}
	In particular, putting $e'=e$ in~\eqref{eq:afrel} we obtain, using $\langle \eps(e),e\rangle = 1$ and $\langle c(e),e\rangle = -2$, that
	\begin{equation}\label{eq:gengood}
		a(\overline{e})=-a(e)
	\end{equation}
	holds for all edges $e\in E$. 
	
	Recall that a set of linearly independent elements $v_1,\ldots,v_k$ in a lattice $L$ is called a \emph{primitive set in $L$} if the rational vector space $V$ spanned by $v_1,\ldots, v_k$ in $L\otimes \Q$ satisfies that $V\cap L$ equals the $\Z$-span of $v_1,\ldots,v_k$. 
	It is a well-known fact that a subset of $L$ can be extended to a lattice basis of $L$ if and only if it is primitive. 
	
	\begin{lm}\label{lem:componentsofaxialfunction}
		Let $\Gamma$ be an unsigned GKM graph, with a given lift $\widetilde{\alpha}:E\to \Z^k$ of the axial function $\alpha$, and associated sign and invariant functions $\varepsilon$ and $c$ (see Section~\ref{sec:gkmbasic}). Then the components $\widetilde{\alpha}_i:E\to \Z$ of $\widetilde{\alpha}$, $i=1,\ldots,k$, are elements of $L(\Gamma)$. Furthermore:
		\begin{enumerate}
			\item  
			$\Gamma$ is almost effective if and only if these elements are linearly independent in $L(\Gamma)$.
			\item $\Gamma$ is effective if and only if $\{\widetilde{\alpha}_1,\ldots,\widetilde{\alpha}_k\}$ forms a primitive set in $L(\Gamma)$.
		\end{enumerate}
	\end{lm}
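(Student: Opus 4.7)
The first claim, that each coordinate $\widetilde{\alpha}_{i}$ lies in $L(\Gamma)$, is immediate: projecting the defining relation~\eqref{eq:epsandc} to the $i$-th coordinate of $\Z^{k}$ yields exactly the relation~\eqref{eq:afrel} defining $L(\Gamma)$. The strategy for the two equivalences in (1) and (2) is to translate linear-algebra questions about $\widetilde{\alpha}_{1},\dots,\widetilde{\alpha}_{k}$ inside $L(\Gamma)$ into lattice-theoretic questions about the image $\widetilde{\alpha}(E)\subset \Z^{k}$.

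For (1), I would observe that a $\Z$-linear relation $\sum_{i}\lambda_{i}\widetilde{\alpha}_{i}=0$ in $L(\Gamma)$, evaluated at every $e\in E$, is equivalent to $\lambda\cdot \widetilde{\alpha}(e)=0$ for all $e$, where $\lambda=(\lambda_{1},\dots,\lambda_{k})\in \Z^{k}$. Such a nonzero $\lambda$ exists precisely when $\widetilde{\alpha}(E)$ lies in a proper $\Q$-hyperplane of $\Q^{k}$, i.e., when its $\Z$-span has rank strictly less than $k$. Negating gives the desired equivalence: the $\widetilde{\alpha}_{i}$ are $\Z$-linearly independent in $L(\Gamma)$ iff $\Gamma$ is almost effective.

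For (2), since primitivity forces linear independence, by (1) I may assume almost effectivity throughout. Let $L'\subset \Z^{k}$ denote the $\Z$-span of $\widetilde{\alpha}(E)$, now a full-rank sublattice. The key identification is that, under the $\Q$-linear isomorphism $\Q^{k}\to V$, $q\mapsto \sum_{i}q_{i}\widetilde{\alpha}_{i}$, onto the $\Q$-span $V\subset L(\Gamma)\otimes \Q$ of the $\widetilde{\alpha}_{i}$'s, the subgroup $\sum_{i}\Z\widetilde{\alpha}_{i}$ corresponds to $\Z^{k}$ while $L(\Gamma)\cap V$ corresponds to the dual lattice $(L')^{*}=\{q\in \Q^{k}\mid q\cdot L'\subset \Z\}$: indeed, $\sum_{i}q_{i}\widetilde{\alpha}_{i}$ is $\Z$-valued on $E$ iff $q\cdot \widetilde{\alpha}(e)\in \Z$ for every $e\in E$. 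Thus $\{\widetilde{\alpha}_{1},\dots,\widetilde{\alpha}_{k}\}$ is primitive in $L(\Gamma)$ iff $(L')^{*}=\Z^{k}$, which by the standard duality $[(L')^{*}:\Z^{k}]=[\Z^{k}:L']$ holds iff $L'=\Z^{k}$, i.e., iff $\Gamma$ is effective.

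\textbf{Main obstacle.} The argument is essentially bookkeeping; the only point requiring some care is the identification of $L(\Gamma)\cap V$ with the dual lattice $(L')^{*}$, together with the invocation of the standard duality between sublattice indices and dual-lattice indices in $\Z^{k}$. The fact that $L(\Gamma)$ is torsion-free as a subgroup of $\Z^{E}$ ensures that the passage to $L(\Gamma)\otimes \Q$ is harmless.
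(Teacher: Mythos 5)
Your proof is correct. The first claim and part (1) follow the paper's own argument essentially verbatim: componentwise projection of the congruence relation, and the observation that an integral linear relation among the $\widetilde{\alpha}_i$ in $L(\Gamma)$ is the same as a linear form on $\Z^k$ vanishing on the image of $\widetilde{\alpha}$. For part (2), however, you take a genuinely different route. The paper argues in both directions by exhibiting an explicit rational $k\times k$ change-of-basis matrix $C$ with integral inverse of determinant $\neq \pm 1$, converting non-primitivity of $\{\widetilde{\alpha}_1,\dots,\widetilde{\alpha}_k\}$ into a strictly larger axial function $\beta$ (and vice versa). You instead identify the saturation $L(\Gamma)\cap V$ with the dual lattice $(L')^{*}$ of the span $L'$ of $\widetilde{\alpha}(E)$ under $q\mapsto \sum_i q_i\widetilde{\alpha}_i$ --- which is valid because the defining relation \eqref{eq:afrel} is linear in $a$, so a rational combination of the $\widetilde{\alpha}_i$ lies in $L(\Gamma)$ exactly when it is integer-valued --- and then invoke the index duality $[(L')^{*}:\Z^k]=[\Z^k:L']$. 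This buys a cleaner, single identification in place of two separate matrix constructions, and it makes transparent that the failure of primitivity and the failure of effectivity are measured by the same index. Your explicit reduction to the almost effective case via part (1) also tidies up a point the paper leaves implicit (a non-independent set is automatically non-primitive, and the graph is then automatically non-effective). The one step you should spell out if you write this up is the linearity observation just mentioned, since membership in $L(\Gamma)$ is a priori more than integrality of values.
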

	\begin{proof} Equation~\eqref{eq:epsandc} says that 
		\[
		\widetilde{\alpha}(\nabla_e e') = \langle \varepsilon(e),e'\rangle\cdot \widetilde{\alpha}(e') + \langle c(e),e'\rangle \cdot \widetilde{\alpha}(e),
		\]
		holds for all edges $e,e'$ at any vertex $v\in V$, where $\nabla$ is the connection on $\Gamma$. 
		The first claim then follows directly by considering the $k$ components of this equation.
		
		The condition that the elements $\widetilde{\alpha}_i,\ i=1,\dots,k$, are linearly dependent in $L(\Gamma)$ is equivalent to the existence of integer constants $c_i$ such that $\sum_i c_i \widetilde{\alpha}_i = 0$. But this is equivalent to the condition that $\sum_i c_i \widetilde{\alpha}_i(e) = 0$ holds for each edge $e\in E$, i.e., that there is a linear equation in $\Z^k$ that is satisfied by every element in the image of $\widetilde{\alpha}$. In turn, this is the same as the axial function $\alpha$ not being almost effective. 
		
		Assuming that the collection $\widetilde{\alpha}_i$, $i=1,\dots,k$, does not form a primitive set in $L(\Gamma)$, we find a rational $k\times k$-matrix $C=(c_{ij})$ whose inverse $C^{-1}$ has only integer entries and determinant $|\det C^{-1}| \neq \pm 1$, such that the elements $\beta_i:= \sum_{j=1}^k c_{ij} \widetilde{\alpha}_j$, $i=1,\ldots,k$, form a primitive set in $L(\Gamma)$. But then $\beta:=(\beta_1,\ldots,\beta_k)\colon E\to \Z^k$ is an axial function as well; this shows that the $\Z$-span of the image of $\alpha$ is a proper sublattice of $\Z^k$, i.e., $\alpha$ is not effective. Conversely, we argue in the same way: if $\alpha$ is not effective, we find a rational matrix $C$ of the same type which sends an integer basis of the $\Z$-span of ${\mathrm{im}}\, \alpha$ to the standard basis of $\Z^k$. Then, putting $\beta_i:=\sum_{j=1}^k c_{ij}\widetilde{\alpha}_j\in L(\Gamma)$, the $\Z$-span of the $\widetilde{\alpha}_i$ is a proper sublattice of the $\Z$-span of the $\beta_i$. Consequently, the $\widetilde{\alpha}_i$ cannot form a primitive set in $L(\Gamma)$.
	\end{proof}
	\begin{rem}\label{rem:labax}
		Conversely, any collection $a_{1},\dots,a_{k}\in L(\Gamma)$ of label functions defines a map
		$(a_1,\ldots,a_k)\colon E\to \Z^k$ satisfying the congruence relation~\eqref{eq:epsandc}, and hence condition (3) on the axial function in Definition~\ref{defn:abstractunsignedgraph} of an abstract unsigned GKM graph (respectively Definition~\ref{defn:abstractsignedgraph} of an abstract signed GKM graph in case $\eps\equiv 1$).
	\end{rem}
	
	\subsection{Group of axial functions}
	
	The idea in~\cite{ku-19} is to rewrite the label functions as maps on the vertices of $\Gamma$ instead of the edges. We extend the definition of axial function groups to the case of an unsigned GKM graph $\Gamma$.
	Using this idea, as well as coordinate-free notation, we show that these groups are isomorphic to the invariants of a suitable representation of the fundamental group $\pi_{1}(\Gamma)$.
	
	\begin{defn}[{Compare with~\cite{ku-19}}]
		Let $\Gamma$ be a graph with a connection $\nabla$, equipped with a sign function $\eps$ and an invariant function $c$. Then we define the group $A(\Gamma)$ as the additive group consisting of elements of the form $f = (f_v)_{v\in V}$, $f_v\in \Z \str v$, satisfying
		\begin{equation}\label{eq:axialrel1}
			f_{t(e)}=\nabla_{e}\bigl(\eps(e)\cdot f_{i(e)}+\la f_{i(e)},e\ra\cdot c(e)\bigr),
		\end{equation}
		for any $e\in E$. 
		We call the group $A(\Gamma)$ the \textit{group of axial functions} on the graph $\Gamma$.
	\end{defn}
	
	Note that although this group is called the group of axial functions, its elements are itself not axial functions; however, we have:
	\begin{lm}\label{lm:labelaxialfunctions} 
		The groups of label and axial functions are isomorphic, via the homomorphism $\Phi:L(\Gamma)\to A(\Gamma)$ defined by
		\[
		\Phi(a)_v:=\sum_{e\in\str v} a(e) \cdot e.
		\]
		Its inverse is given by
		\[
		\Phi^{-1}(f)(e) := \langle f_{i(e)},e\rangle 
		\]
	\end{lm}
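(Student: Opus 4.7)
The plan is to verify that $\Phi$ and $\Phi^{-1}$ are additive, take values in the correct groups, and are mutually inverse. Additivity is immediate because the group operation is pointwise addition on both sides and both formulas are linear in the input. Mutual inverseness reduces to orthonormality of the basis $\str v$: one has $\Phi^{-1}(\Phi(a))(e) = \langle \sum_{e' \in \str i(e)} a(e') e', e\rangle = a(e)$, and symmetrically $\Phi(\Phi^{-1}(f))_{v} = \sum_{e \in \str v} \langle f_{v}, e\rangle e = f_{v}$.

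The substantive content is well-definedness. I would isolate two elementary observations and then push everything through them. First, $\nabla_{e}\colon \Z\str i(e) \to \Z\str t(e)$ is an isometry for the chosen scalar products, since it sends the orthonormal basis $\str i(e)$ bijectively to the orthonormal basis $\str t(e)$. Second, the $\mu_{2}\str v$-action on $\Z\str v$ is diagonal in the standard basis, so that $\langle \eps(e) \cdot x, e'\rangle = \langle \eps(e), e'\rangle \langle x, e'\rangle$ for every $x \in \Z\str v$ and every $e' \in \str v$.

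To check $\Phi(a) \in A(\Gamma)$, set $f_{v} := \sum_{e' \in \str v} a(e') e'$, fix an edge $e$ with $v = i(e)$, and expand the right-hand side of~\eqref{eq:axialrel1} using the diagonal action formula; the defining relation~\eqref{eq:afrel} of $L(\Gamma)$ identifies the coefficients in the expansion with $a(\nabla_{e} e')$, and reindexing along the bijection $\nabla_{e}\colon\str i(e) \to \str t(e)$ yields $f_{t(e)}$, as required. To check $\Phi^{-1}(f) \in L(\Gamma)$, compute $\Phi^{-1}(f)(\nabla_{e} e') = \langle f_{t(e)}, \nabla_{e} e'\rangle$, substitute~\eqref{eq:axialrel1}, pull $\nabla_{e}$ off the pairing using the isometry property, and apply the diagonal action formula; this recovers exactly the right-hand side of~\eqref{eq:afrel}.

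There is no real conceptual obstacle. The only care required is convention bookkeeping: $\eps(e)\cdot$ and $\nabla_{e}$ both act on $\Z\str v$ but in different ways (componentwise multiplication in the standard basis versus a basis permutation landing in $\Z\str t(e)$), and the normalisations $\langle \eps(e), e\rangle = 1$, $\langle c(e), e\rangle = -2$ have to be respected so that the vertex-indexed description in $A(\Gamma)$ and the edge-indexed description in $L(\Gamma)$ agree on the diagonal $e' = e$.
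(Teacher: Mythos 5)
Your proposal is correct and follows essentially the same route as the paper's proof: both directions of well-definedness are verified by the same expansion of the defining relations \eqref{eq:afrel} and \eqref{eq:axialrel1}, reindexed along the bijection $\nabla_e\colon \str i(e)\to\str t(e)$, with mutual inverseness coming from orthonormality of $\str v$. The only difference is that you spell out the additivity and inverse checks that the paper dismisses as clear, which is harmless.
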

	In particular, the components $\widetilde{\alpha}_i\in L(\Gamma)$ of (the lift of) an axial function of a GKM graph (cf.\ Lemma~\ref{lem:componentsofaxialfunction}) provide, via the assignment $\Phi$, elements of $A(\Gamma)$.
	\begin{proof}
		Let us show that for $a\in L(\Gamma)$, the tuple $(\Phi(a)_v)_{v\in V}$ in the statement of the lemma defines an element of $A(\Gamma)$. For any edge $e$ we compute
		\begin{align*}
			\Phi(a)_{t(e)} &= \sum_{e'\in\str i(e)} a(\nabla_ee')\cdot \nabla_ee'\\
			&= \sum_{e'\in\str i(e)} (\langle \eps(e),e'\rangle \cdot a(e') \cdot \nabla_ee' + \langle c(e),e'\rangle \cdot a(e)\cdot \nabla_e e')\\
			&= \biggl(\sum_{e'\in\str i(e)} \langle \eps(e),e'\rangle \cdot a(e') \cdot \nabla_ee'\biggr) + a(e)\cdot \nabla_e c(e)\\
			&= \nabla_e \biggl(\eps(e)\cdot \Phi(a)_{i(e)} + \langle \Phi(a)_{i(e)},e\rangle \cdot c(e)\biggr).
		\end{align*}
		Conversely, starting with a tuple $f=(f_v)_{v\in V}\in A(\Gamma)$, we define $a$ via $a(e):= \langle f_{i(e)},e\rangle$ and check that $a\in L(\Gamma)$: 
		\begin{align*}
			a(\nabla_e e') &= \langle f_{i(\nabla_e e')},\nabla_e e') \\
			&= \langle f_{t(e)},\nabla_e e'\rangle\\
			&= \langle \eps(e),e'\rangle \cdot \langle f_{i(e)},e'\rangle + \langle f_{i(e)},e\rangle \cdot \langle c(e),e'\rangle\\
			&= \langle \eps(e),e'\rangle \cdot a(e') + \langle c(e),e'\rangle \cdot a(e).
		\end{align*}
		Clearly, these two assignments are homomorphisms and are inverse to each other.
	\end{proof}
	
	As $\Gamma$ is connected, any element $f\in A(\Gamma)$ is determined by its value $f_u$ at any vertex $u\in V$. Hence $A(\Gamma)$ may be regarded as a (free abelian) subgroup of $\Z \str u$. Let us determine those elements in $\Z \str u$ that extend (uniquely) to an element in $A(\Gamma)$. To this end consider, for any oriented edge $e\in E(\Gamma)$, the homomorphism
	\begin{equation}\label{eq:lfunc}
		\varphi_e = \varphi_{e}^{\Gamma}\colon
		\Z \str i(e) \longrightarrow \Z \str t(e);\, 
		x\longmapsto \nabla_e\bigl(\eps(e)\cdot x + \langle x,e\rangle \cdot c(e)\bigr).
	\end{equation}
	Explicitly, on generators it satisfies
	\begin{equation}\label{eq:phiongenerators} \varphi_e(e) =  \overline{e}  + \nabla_e c(e) = \overline{e} + \eps(\overline{e})\cdot c(\overline{e}) ,\qquad \varphi_e(e') = \eps(\overline{e})\cdot\nabla_e e' \quad \textrm{for} \quad e'\neq e.
	\end{equation}
	Then, a tuple $(f_v)_v$ is in $A(\Gamma)$ if and only if
	\begin{equation}\label{eq:reformulateAGamma}
		f_{t(e)} = \varphi_e (f_{i(e)}),
	\end{equation}
	holds for all oriented edges $e$.   For an edge path $\gamma = (e_1,\ldots,e_q)$ in $\Gamma$, we put $\varphi_\gamma:=\varphi_{e_q}\circ \cdots \circ \varphi_{e_1}$.
	
	\begin{lm}
		For any unsigned GKM-graph $\Gamma$, the groups $A(\Gamma)$ and $L(\Gamma)$ do not depend on the choice of lift of the axial function $\alpha$, up to a group isomorphism.
	\end{lm}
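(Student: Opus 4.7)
The plan is to compare the groups $L(\Gamma)$ and $A(\Gamma)$ associated to two different lifts $\widetilde{\alpha},\widetilde{\alpha}'\colon E\to \Z^{k}$ of the same unsigned axial function $\alpha$, and to exhibit a natural isomorphism between them. By Lemma~\ref{lm:labelaxialfunctions} it suffices to handle $L(\Gamma)$, since the isomorphism then transports to $A(\Gamma)$ through the canonical identification $\Phi$.

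First I would observe that any two lifts of $\alpha$ differ by a function $\sigma\colon E\to \lb\pm 1\rb$ via $\widetilde{\alpha}'(e) = \sigma(e)\,\widetilde{\alpha}(e)$, and the requirement $\widetilde{\alpha}'(\overline{e}) = -\widetilde{\alpha}'(e)$ forces $\sigma(\overline{e})=\sigma(e)$. Substituting $\widetilde{\alpha}' = \sigma\,\widetilde{\alpha}$ into~\eqref{eq:epsandc} and using the linear independence of $\widetilde{\alpha}(e)$ and $\widetilde{\alpha}(e')$ from the GKM axioms yields the transformation rules
\[
\la \varepsilon'(e), e'\ra = \sigma(\nabla_{e} e')\,\sigma(e')\,\la \varepsilon(e), e'\ra,\qquad \la c'(e), e'\ra = \sigma(\nabla_{e} e')\,\sigma(e)\,\la c(e), e'\ra,
\]
and the normalization~\eqref{eq:epsandcunique} is preserved automatically via $\sigma(\nabla_{e}e) = \sigma(\overline{e})=\sigma(e)$.

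Next I would define the candidate isomorphism $\Psi\colon L_{\widetilde{\alpha}}(\Gamma)\to L_{\widetilde{\alpha}'}(\Gamma)$ by $\Psi(a)(e):=\sigma(e)\,a(e)$. A direct substitution shows that the defining congruence~\eqref{eq:afrel} for $(\varepsilon', c')$ is satisfied by $\Psi(a)$ whenever $a$ satisfies the analogous relation for $(\varepsilon, c)$: the factors of $\sigma$ accumulating on both sides cancel thanks to the transformation rules above together with $\sigma^{2}\equiv 1$. The map $\Psi$ is visibly an additive homomorphism, and the formula with $\sigma$ in place of itself furnishes a two-sided inverse, so $\Psi$ is an isomorphism of groups; composing with the identifications of Lemma~\ref{lm:labelaxialfunctions} for the two lifts yields the corresponding isomorphism of axial function groups. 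The main technical step is the bookkeeping needed to derive the transformation rules for $\varepsilon$ and $c$ and then to verify the compatibility of $\Psi$ with~\eqref{eq:afrel}; no deeper structural obstacle arises, as everything reduces to sign tracking for $\sigma$.
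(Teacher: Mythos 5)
Your proposal is correct and follows essentially the same route as the paper: both encode the discrepancy between the two lifts as a sign function on edges, derive the resulting transformation rules for $\varepsilon$ and $c$ from $2$-independence, and use the diagonal sign-multiplication map as the isomorphism. The only cosmetic difference is that you work on the $L(\Gamma)$ side while the paper works on $A(\Gamma)$ via the maps $F_v(e)=d(e)\cdot e$; under the identification $\Phi$ of Lemma~\ref{lm:labelaxialfunctions} your $\Psi$ is exactly the paper's $F$.
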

	\begin{proof}
		By Lemma~\ref{lm:labelaxialfunctions}, it is enough to prove the lemma for $A(\Gamma)$.
		Let $\widetilde{\alpha}^{i}$ be lifts of $\alpha$ with the respective $c^{i}$, $\eps^{i}$, where $i=1,2$.
		By the lift condition, one has
		\[
		\widetilde{\alpha}^{2}(e)=d(e)\cdot\widetilde{\alpha}^{1}(e),
		\]
		for some $d\colon E\to\lb\pm 1\rb$.
		From $2$-independence and the congruence relation one obtains
		\begin{equation}\label{eq:indeplift}
			\la\eps^{2}(e),e'\ra d(e')=\la\eps^{1}(e),e'\ra d(\nabla_{e}e'),\ 
			\la c^{2}(e),e'\ra d(e)=\la c^{1}(e),e'\ra d(\nabla_{e}e').
		\end{equation}
		Let
		\[
		F_{v}\colon \Z \str v\to \Z \str v,\ e\mapsto d(e)\cdot e.
		\]
		The relations \eqref{eq:indeplift} imply that the following diagram is commutative:
		\[
		\begin{tikzcd}
			\Z \str i(e) \arrow{r}{\phi^{1}_{e}} \arrow{d}{F_{i(e)}} & \Z \str t(e)\arrow{d}{F_{t(e)}}\\
			\Z \str i(e) \arrow{r}{\phi^{2}_{e}} & \Z \str t(e),
		\end{tikzcd}
		\]
		where $\phi^{i}_{e}$ is given by the formula~\eqref{eq:lfunc} with respect to $c^{i}$, $\eps^{i}$, $i=1,2$. Using \eqref{eq:reformulateAGamma}, it follows that the $F_v$ together define an isomorphism between the respective axial function groups.
	\end{proof}
	
	Any edge path defines, by linear parametrization, a curve in the topological space $\Gamma$. In particular, a cycle induces a loop, and hence a cycle based at $v$ an element in $\pi_1(\Gamma,v)$. Conversely, any element in $\pi_1(\Gamma,v)$ can be represented by a cycle based at $v$. 
	It is well known that the fundamental group of a graph is isomorphic to the quotient of the group generated by concatenations of cycles modulo equivalence relation given by elementary homotopies~\cite[\S 3.7]{sp-66}.
	
	\begin{defn}
		We call the $\pi_{1}(\Gamma,u)$-representation on $\Z \str u$ given by the formula
		\begin{equation}\label{eq:actiongen}
			[\gamma]\cdot x:=\phi_{\gamma}(x)
		\end{equation}
		the \textit{A-monodromy representation}, where $[\gamma]$ denotes the class in $\pi_{1}(\Gamma,u)$ of an edge loop $\gamma$ based at the vertex $u$ in $\Gamma$.
		The image of $\pi_{1}(\Gamma,u)$ in $\GL(\Z \str u)$ is called the \textit{A-monodromy group} at $u\in \Gamma$.
	\end{defn}
	
	\begin{pr}\label{pr:cwd}
		The A-monodromy representation is well-defined. More precisely: Let $\gamma$, $\gamma'$ be any two edge loops based at $u$ in $\Gamma$ such that $[\gamma]=[\gamma']$ holds in $\pi_{1}(\Gamma,u)$.
		Then one has $\varphi_{\gamma}=\varphi_{\gamma'}$.
	\end{pr}
	\begin{proof}
		It suffices to check that  $\phi_{\overline{e}}\cdot\phi_{e}=\Id$ for any edge $e\in E$.
		We verify this identity on the generators of $\Z \str i(e)$. 
		For any edge $e'\neq e$ at $i(e)$, 
		\[
		\varphi_{\overline{e}}\circ \varphi_e(e') = \varphi_{\overline{e}}(\eps(\overline{e})\cdot\nabla_e e') = 
		\eps(\overline{e})\cdot \eps(e)\cdot \nabla_{\overline{e}}\nabla_e e' = e'.
		\]
		Moreover, we have 
		\begin{align*}
			\varphi_{\overline{e}}(\eps(\overline{e})\cdot c(\overline{e})) &= 
			\nabla_{\overline{e}} (\eps(\overline{e})\cdot \eps(\overline{e})\cdot c(\overline{e}) + \langle \eps(\overline{e})\cdot c(\overline{e}),\overline{e}\rangle c(\overline{e}))\\
			&= \nabla_{\overline{e}} (c(\overline{e})-2c(\overline{e})) \\
			&= - \eps(e)\cdot c(e),
		\end{align*}
		by definition of sign and invariant function, whence
		\[
		\varphi_{\overline{e}}\circ \varphi_e(e) = 
		\varphi_{\overline{e}}(\overline{e} + \eps(\overline{e})\cdot c(\overline{e})) = e+\eps(e)\cdot c(e) -\eps(e)\cdot c(e) = e.
		\]
	\end{proof}
	\begin{rem}
		The $\pi_1$-actions at two different vertices $u,v$ of the connected graph $\Gamma$ are isomorphic. To see this choose any edge path $\gamma$ from $u$ to $v$ in $\Gamma$. It induces an isomorphism $\varphi_\gamma:\Z \str u\to \Z \str v$ as well as a group isomorphism $\pi_1(\Gamma,v)\to \pi_1(\Gamma,u);\, [\delta]\mapsto [\gamma^{-1}\delta\gamma]$, which intertwine the corresponding representations, as
		\[
		\varphi_\delta(\varphi_\gamma(x)) = \varphi_\gamma(\varphi_{\gamma^{-1}\delta\gamma}(x))
		\]
		for all $x\in \Z \str u$.
	\end{rem}
	\begin{pr}\label{pr:invariants}
		The group $A(\Gamma)$ of axial functions is isomorphic to $(\Z \str u)^{\pi_1(\Gamma,u)}$.
	\end{pr}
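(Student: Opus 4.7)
The plan is to use evaluation at the base vertex $u$ to identify $A(\Gamma)$ with a subgroup of $\Z\str u$, and then show that this subgroup is precisely the fixed-point set of the A-monodromy representation.

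First I would define the evaluation homomorphism
\[
\mathrm{ev}_u\colon A(\Gamma)\longrightarrow \Z\str u,\qquad (f_v)_{v\in V}\longmapsto f_u.
\]
Since $\Gamma$ is connected and the defining relation~\eqref{eq:reformulateAGamma} reads $f_{t(e)}=\varphi_e(f_{i(e)})$, any edge path $\gamma$ from $u$ to $v$ forces $f_v=\varphi_\gamma(f_u)$. This shows at once that $\mathrm{ev}_u$ is injective, and moreover that its image lands inside $(\Z\str u)^{\pi_1(\Gamma,u)}$: for any cycle $\gamma$ based at $u$ we must have $\varphi_\gamma(f_u)=f_u$, since the propagation along $\gamma$ brings us back to $f_u$. (Well-definedness of $\varphi_\gamma$ on homotopy classes is Proposition~\ref{pr:cwd}, so this condition really is $\pi_1$-invariance.)

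The remaining step is surjectivity onto $(\Z\str u)^{\pi_1(\Gamma,u)}$. Given an invariant element $x\in\Z\str u$, I define, for each vertex $v$,
\[
f_v := \varphi_\gamma(x),
\]
where $\gamma$ is any edge path from $u$ to $v$. This does not depend on the choice of $\gamma$: if $\gamma_1,\gamma_2$ are two such paths, then $\gamma_1\gamma_2^{-1}$ is a cycle at $u$, so by the invariance hypothesis and Proposition~\ref{pr:cwd} we obtain $\varphi_{\gamma_2}^{-1}\varphi_{\gamma_1}(x)=x$, i.e.\ $\varphi_{\gamma_1}(x)=\varphi_{\gamma_2}(x)$. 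It is then immediate that the tuple $(f_v)_{v\in V}$ satisfies $f_{t(e)}=\varphi_e(f_{i(e)})$ for every oriented edge $e$: concatenating a chosen path from $u$ to $i(e)$ with $e$ yields a valid path from $u$ to $t(e)$. By~\eqref{eq:reformulateAGamma}, this tuple lies in $A(\Gamma)$ and clearly evaluates to $x$ at $u$.

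The only potential obstacle is the well-definedness of $\varphi_\gamma$ on elements of $\pi_1(\Gamma,u)$, i.e.\ invariance under elementary homotopies of edge paths; but this is exactly what Proposition~\ref{pr:cwd} gives (combined with the obvious functoriality $\varphi_{\gamma\delta}=\varphi_\delta\circ\varphi_\gamma$). Both constructions are evidently group homomorphisms and mutually inverse, completing the isomorphism $A(\Gamma)\cong (\Z\str u)^{\pi_1(\Gamma,u)}$.
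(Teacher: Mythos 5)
Your argument is correct and coincides with the paper's own proof: evaluation at $u$ embeds $A(\Gamma)$ into $\Z\str u$ via the propagation rule $f_{t(e)}=\varphi_e(f_{i(e)})$, the image is forced to be $\pi_1(\Gamma,u)$-invariant, and conversely an invariant element is propagated along paths to a well-defined tuple in $A(\Gamma)$. The only difference is that you spell out the well-definedness and mutual inverseness in more detail than the paper does.
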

	\begin{proof}
		By~\eqref{eq:reformulateAGamma}, the value of any element in $A(\Gamma)$ at $u$ is invariant under the action of $\pi_1(\Gamma,u)$. Conversely, given an element $x\in \Z \str u$ invariant under the A-monodromy group, we define $f=(f_v)_{v\in V}$ as follows: for any $v\in V(\Gamma)$ we choose an edge path $\gamma$ from $u$ to $v$, and put $f_v:=\varphi_\gamma(x)$. By $\pi_1$-invariance, $f_v$ is independent of the choice of $\gamma$, and $(f_v)_v\in A(\Gamma)$.
	\end{proof}
	
	\begin{rem}
		Given a graph $(V,E)$ with a connection $\nabla$, such that any two edges in $(V,E)$ with a common vertex belong to a $2$-face with respect to $\nabla$.
		Consider the following problem.
		Does there exist an axial function compatible with $(V,E)$ and $\nabla$?
		The existence of an axial function is equivalent to \textit{polynomial} equations (depending on a sign function) in variables being the components $\la c(e),e'\ra$ of an invariant function $c$ at incident edges $e,e'$.
		For smooth projective toric varieties these equations appear in~\cite[pp.\ 45--46]{od-85}, and admit explicit solutions for cycles of length $3$, $4$ (ibid.).
		More generally, the equations (involving sign function values) for quasitoric manifolds were obtained in~\cite[Theorem 3]{do-01} and were solved for the edge graph of any cube (ibid.).
	\end{rem}
	
	\begin{rem}
		A collection of abelian group isomorphisms $\psi_{e}$ satisfying $\psi_{\overline{e}}\psi_{e}=\Id$ for every $e\in E$ is nothing other than a locally constant sheaf (or a coefficient system) of abelian groups on the underlying graph $G=(V, E)$ equipped with the Alexandrov topology on $V\sqcup E$.
		The topology is defined by declaring $V'\sqcup E'$ to be open for any subgraph $G'=(V', E')$ such that with every edge it contains both its vertices.
		E.g. see~\cite[\S 13]{bo-tu-82}, \cite{ba-10}.
		The group of axial functions is isomorphic to the group of global sections for the sheaf defined by the formula \eqref{eq:lfunc}.
		The group of label functions is isomorphic to the group of global sections for the sheaf defined by:
		\[
		\psi_{e}\colon (\Z\str i(e))^{*}\to (\Z\str t(e))^{*},\ 
		\psi_{e}(a):=\nabla_{e}\bigl(\eps(e)\cdot a + \langle a,e\rangle \cdot c(e)^{*}\bigr),
		\]
		where we define the action of $\mu_{2}\str v$ and $\nabla_{e}$ on the dual space as the adjoint operators with respect to the natural pairing
		\[
		\la-,-\ra\colon (\Z\str v)^{*}\times \Z\str v\to\Z,\ 
		\la a,x\ra:=a(x),
		\]
		and $c(e)^{*}$ is the dual linear function to $c(e)$.
		We have the following commutative diagram
		\[
		\begin{tikzcd}
			\Z \str i(e) \arrow{r}{\phi_{e}} \arrow{d}{\cong} & \Z \str t(e)\arrow{d}{\cong}\\
			(\Z \str i(e))^{*} \arrow{r}{\psi_{e}} & (\Z \str t(e))^{*},
		\end{tikzcd}
		\]
		where both vertical arrows are given by mapping an element to its linearly dual.
		From this perspective, Lemma~\ref{lm:labelaxialfunctions} shows that the respective sheaves are isomorphic.
	\end{rem}
	
	\subsection{Extension criterion for unsigned GKM graphs}
	
	In this section, we give a different proof of the GKM graph extension criterion~\cite{ku-19} in the slightly greater generality of unsigned GKM graphs (compared with~\cite{ku-19}) below.
	
	\begin{defn}\label{def:ext}
		Let $\Gamma$ be a signed GKM graph with the axial function $\alpha:E\to \Z^k$. A GKM graph $\Gamma'$ with the same underlying graph and the same connection as $\Gamma$ is called an \emph{extension of $\Gamma$} if its axial function $\alpha':E\to \Z^{k'}$ satisfies that there exits a group epimorphism $p\colon \Z^{k'}\to \Z^{k}$ such that 
		\begin{equation}\label{eq:extension}
			p\circ \alpha'=\alpha.
		\end{equation}
		The same definition applies to unsigned GKM graphs, as~\eqref{eq:extension} is meaningful also when $\alpha$ and $\alpha'$ take values in $\Z^k/\pm 1$ respectively $\Z^{k'}/\pm 1$.
	\end{defn}
	
	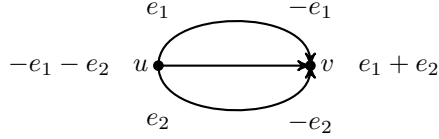
\begin{figure}[h]
		\begin{center}
			\begin{tikzpicture}[node distance=3cm, thick,main node/.style={circle,draw,font=\sffamily\Large\bfseries}]
				\fill (-1,0) coordinate (u) circle (2pt);
				\fill (1,0) coordinate (v) circle (2pt);
				\node[left] at (-1,0) {$u$};
				\node[right] at (1,0) {$v$};
				\draw[->,>=stealth'] (u)--(v);
				\draw[->,>=stealth'] (u) to [out=90,in=90] (v);
				\draw[->,>=stealth'] (u) to [out=270,in=270] (v);
				\node[above] at (-1,0.5) {$e_{1}$};
				\node[below] at (-1,-0.5) {$e_{2}$};
				\node[left] at (-1.5,0) {$-e_{1}-e_{2}$};
				\node[above] at (1,0.5) {$-e_{1}$};
				\node[below] at (1,-0.5) {$-e_{2}$};
				\node[right] at (1.5,0) {$e_{1}+e_{2}$};
			\end{tikzpicture}
			\caption{The signed GKM graph (with the connection acting by nontrivial permutation along every edge) of the canonical $T^{2}$-action on $G_{2}/SU_{3}=S^6$ does not extend to a torus graph~\cite{ku-19}; the respective unsigned GKM graph (with the different connection sending every edge to itself) extends to an unsigned torus graph, corresponding to the standard action of $T^3$ on $S^6\subset \C^3\oplus \R$
			}
			\label{fig:sph}
		\end{center}
	\end{figure}
	
	\begin{rem}
		Consider an extension $\Gamma'$ of $\Gamma$; in case the graphs are unsigned, choose the lift of $\alpha'$ as an extension of the lift of $\alpha$. Then one can show that the equalities
		\[
		\eps'=\eps,\ 
		c'=c,
		\]
		of the associated sign and invariant functions hold.
		This explains the choice of name ``invariant function'' (see~\cite{ku-19}).
	\end{rem}
	\begin{rem}
		Consider a GKM action of a torus $T'$ on a manifold $M$, and a subtorus $T$ such that the restiction of the action to $T$ is still GKM. In this situation, the GKM graph of the $T'$-action is an extension of the GKM graph of the $T$-action.
	\end{rem}
	
	\begin{thm}[Compare with~\cite{ku-19}]\label{thm:kurrk}
		For any unsigned $(n,k)$-type GKM graph $\Gamma$, $\rk A(\Gamma)\geq q$ holds if and only if there exists an unsigned $(n,q)$-GKM graph which is an extension of $\Gamma$. If $\Gamma$ is effective, the extension may be chosen to be effective as well.
		The same statement is true for signed GKM graphs.
	\end{thm}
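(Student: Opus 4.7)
The plan is to reduce the theorem to linear algebra in the group of label functions $L(\Gamma)$ via the isomorphism $\Phi\colon L(\Gamma)\to A(\Gamma)$ of Lemma~\ref{lm:labelaxialfunctions}, and to exploit the correspondence between lifts of the axial function of a GKM graph and linearly independent tuples of elements in $L(\Gamma)$ established in Lemma~\ref{lem:componentsofaxialfunction}. The key slogan is that specifying an $(n,q)$-type extension $\Gamma'$ of $\Gamma$ (with a compatibly chosen lift) amounts to specifying a tuple $(a_{1},\dots,a_{q})$ of label functions in $L(\Gamma)$ that is linearly independent (primitive, in the effective case), with $a_{i}=\widetilde{\alpha}_{i}$ for $i\leq k$. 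Once this dictionary is in place, both implications reduce to statements about extending linearly independent sets or primitive sets in the free abelian group $L(\Gamma)$; the signed case is subsumed by restricting to $\eps\equiv 1$, as indicated after Definition~\ref{defn:abstractsignedgraph}.

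For the direction ``extension implies the rank bound'', I would start from an $(n,q)$-type extension $\Gamma'$ with unsigned axial function $\alpha'$ and epimorphism $p\colon\Z^{q}\to\Z^{k}$ satisfying $p\circ\alpha'=\alpha$. The first step is to pick a lift $\widetilde{\alpha}'\colon E\to\Z^{q}$ of $\alpha'$ with $p\circ\widetilde{\alpha}'=\widetilde{\alpha}$; this is possible by fixing an arbitrary lift and flipping the sign on each unoriented edge where the projection through $p$ happens to give $-\widetilde{\alpha}(e)$. The second step is to invoke the remark following Definition~\ref{def:ext} to identify the sign and invariant functions of $\widetilde{\alpha}'$ with $\eps$ and $c$, yielding $L(\Gamma')=L(\Gamma)$ as subgroups of $\Z^{E}$. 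Finally, the $q$ components of $\widetilde{\alpha}'$ give $q$ elements of $L(\Gamma)$, which are linearly independent by Lemma~\ref{lem:componentsofaxialfunction}(1) applied to the almost effective $\Gamma'$, so $\rk A(\Gamma)\geq q$.

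For the converse direction, assume $\rk A(\Gamma)=\rk L(\Gamma)\geq q$. By Lemma~\ref{lem:componentsofaxialfunction} the components $\widetilde{\alpha}_{1},\dots,\widetilde{\alpha}_{k}$ form a linearly independent tuple in $L(\Gamma)$, which is primitive when $\Gamma$ is effective. Since $L(\Gamma)$ has rank at least $q$ and the subgroup $\langle\widetilde{\alpha}_{1},\dots,\widetilde{\alpha}_{k}\rangle$ is saturated in the effective case, standard lattice theory lets me extend this to $(a_{1}=\widetilde{\alpha}_{1},\dots,a_{k}=\widetilde{\alpha}_{k},a_{k+1},\dots,a_{q})$, preserving linear independence in general and primitivity in the effective case. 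I would then assemble these into $\widetilde{\alpha}'(e):=(a_{1}(e),\dots,a_{q}(e))$ and check the axial-function axioms: compatibility with the connection from Remark~\ref{rem:labax}, the lift condition $\widetilde{\alpha}'(\overline{e})=-\widetilde{\alpha}'(e)$ from~\eqref{eq:gengood}, and $2$-independence by pushing forward to $\Z^{k}$ via the coordinate projection $p\colon\Z^{q}\to\Z^{k}$ and invoking $2$-independence of $\alpha$. The map $p$ then realizes $\Gamma'$, defined using the image $\alpha'$ of $\widetilde{\alpha}'$ in $\Z^{q}/\pm 1$, as an extension of $\Gamma$; almost effectiveness (respectively effectiveness) of $\Gamma'$ is Lemma~\ref{lem:componentsofaxialfunction} applied in reverse.

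The step I expect to be most delicate is the lift-matching at the start of the first direction together with the ensuing identification $\eps'=\eps$ and $c'=c$. Existence of a compatible lift is elementary edge by edge, but to conclude $L(\Gamma')=L(\Gamma)$ I would need to verify that the congruence relation~\eqref{eq:epsandc} for $\widetilde{\alpha}'$, after projection via $p$, reproduces the corresponding relation for $\widetilde{\alpha}$; the uniqueness coming from the normalization~\eqref{eq:epsandcunique} combined with the $2$-independence of $\alpha$ then pins the pair $(\eps',c')$ down to $(\eps,c)$. Everything else is bookkeeping or a direct application of the two central Lemmas~\ref{lm:labelaxialfunctions} and~\ref{lem:componentsofaxialfunction}.
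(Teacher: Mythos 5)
Your proposal is correct and follows essentially the same route as the paper: both directions are reduced, via Lemmas~\ref{lm:labelaxialfunctions} and~\ref{lem:componentsofaxialfunction}, to extending a linearly independent (respectively primitive) set containing $\widetilde{\alpha}_1,\dots,\widetilde{\alpha}_k$ inside the free abelian group $L(\Gamma)$, and your explicit treatment of the lift-matching and of $\eps'=\eps$, $c'=c$ is exactly what the paper's terse parenthetical ``those of its lift that extends the lift $\widetilde{\alpha}$'' relies on. The only cosmetic difference is in the effective case, where you extend the primitive set directly to a primitive set of size $q$, while the paper first takes an arbitrary linearly independent extension $\beta$ and then corrects it by a change of coordinates coming from a splitting of the projection of the span of $\mathrm{im}\,\beta$ onto $\Z^k$ --- two equivalent instances of the same standard lattice fact.
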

	
	\begin{proof}
		We are given an axial function $\alpha:E\to \Z^k/\pm 1$ with lift $\widetilde{\alpha}$, with associated sign and invariant functions $\eps$ and $c$. The signed case corresponds to the constant sign function $\eps\equiv 1$. Consequently, the label function group $L(\Gamma)$ and the axial function group $A(\Gamma)$ are defined. The components $\widetilde{\alpha}_1,\ldots,\widetilde{\alpha}_k$ of $\widetilde{\alpha}$ define linearly independent elements in $L(\Gamma)$ by Lemma~\ref{lem:componentsofaxialfunction}.
		Given an $(n,q)$-type extension of $\alpha$, the same lemma implies that also its components (respectively those of its lift that extends the lift $\widetilde{\alpha}$) define $q$ linearly independent elements of $L(\Gamma)\cong A(\Gamma)$, hence $\rk A(\Gamma)\geq q$.
		
		For the converse direction,  assuming that $\rk L(\Gamma)\geq q$, we find $\widetilde{\alpha}_{k+1},\ldots,\widetilde{\alpha}_q\in L(\Gamma)$ such that $\widetilde{\alpha}_1,\ldots,\widetilde{\alpha}_q$ are linearly independent in $L(\Gamma)$. Then $\beta:=(\widetilde{\alpha}_1,\ldots,\widetilde{\alpha}_q):E\to \Z^q$  satisfies the congruence relation (see Remark~\ref{rem:labax}), and as $\beta$ is an extension of $\alpha$, the labels at each vertex are automatically pairwise linearly independent. By Lemma~\ref{lem:componentsofaxialfunction}, $\beta$ is almost effective.
		
		Let us assume additionally that $\alpha$ is effective. Denoting by $N$ the $\Z$-span of the image of $\beta$, the projection $\pi:N\to \Z^k$ onto the first $k$ components is (split, because $\Z^{k}$ is free and therefore is a projective group) surjective, by effectiveness of the axial function $\alpha$. Hence, we obtain a split short exact sequence 
		\[
		0\longrightarrow \ker \pi\longrightarrow N\longrightarrow \Z^k\longrightarrow 0
		\]
		yielding an isomorphism $N\cong \Z^k\oplus \ker \pi$; combining with an isomorphism $\ker \pi\cong \Z^{q-k}$ we obtain an isomorphism $N\cong \Z^k\oplus \Z^{q-k} = \Z^q$. We compose it with $\beta$
		\[
		E\overset\beta\longrightarrow N\longrightarrow \Z^q
		\]
		to obtain another (lift of an) axial function, whose first $k$ components are $\widetilde{\alpha}_1,\ldots,\widetilde{\alpha}_k$. By construction this axial function is effective.
	\end{proof}
	
	\begin{rem}
		If the original GKM graph $\Gamma$ is $\GKM_r$ for some $r$, i.e., at any vertex any $r$ labels are linearly independent, then any extension of $\Gamma$ is also $\GKM_r$. In general, knowledge on the rank of the group of axial functions does not give information as to whether some extension is GKM$_s$ for some $s>r$. However, if $\rk A(\Gamma)$ equals the valency $n$, then (almost) effectivity of the maximal extension implies that this extension is automatically GKM$_n$, i.e., a torus graph. 
	\end{rem}
	
	\subsection{Proof of Theorem~\ref{thm:2}}
	\begin{defn}[\cite{ta-04}]\label{lm:ptop}
		For an edge path $\gamma=( e_{1},\dots,e_{r})$ in $(V,E)$ denote its initial and terminal vertices by $i(\gamma)$, $t(\gamma)$, respectively. 
		For any $v\in V$ let $\Z \str v$ be the free $\Z$-module with generators equal to the set $\str v$. 
		A connection $\nabla$ on $(V,E)$ defines the \textit{parallel transport operator}
		\[
		\Pi_{\gamma}:\ \Z \str i(\gamma)\to \Z \str t(\gamma),\ \Pi_{\gamma}(e):=\nabla_{e_k}\circ\dots\circ\nabla_{e_1} e,\ e\in \str i(\gamma).
		\]
		In case of $i(\gamma)=t(\gamma)$, $\Pi_{\gamma}$ is called the \textit{monodromy operator} at $i(\gamma)$ along $\gamma$.
	\end{defn}
	
	\begin{thm}\label{thm:extmain}
		Let $\Gamma$ be any unsigned (signed, respectively) $\GKM_{3}$ graph such that the conjugate $2$-faces in $\Gamma$ generate the fundamental group $\pi_{1}(\Gamma)$.
		Suppose that the monodromy along any $2$-face of $\Gamma$ in the sense of Definition \ref{lm:ptop} is trivial on the transversal edges of this $2$-face. 
		Then $\Gamma$ admits an extension to an unsigned (signed, respectively) torus graph. 
	\end{thm}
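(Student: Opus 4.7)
The plan is to apply Theorem~\ref{thm:kurrk}: once I establish $\rk A(\Gamma)\ge n$ where $n$ is the valency, $\Gamma$ extends to an $(n,n)$-type GKM graph, which is a torus graph. By Proposition~\ref{pr:invariants} the group $A(\Gamma)$ is isomorphic to $(\Z\str u)^{\pi_1(\Gamma,u)}$, and since $\Z\str u$ is free of rank $n$, attaining this bound is equivalent to the A-monodromy representation of $\pi_1(\Gamma,u)$ on $\Z\str u$ being trivial. Triviality is preserved by conjugation in $\pi_1(\Gamma,u)$, and conjugated $2$-faces generate this group by assumption, so the task reduces to checking that $\varphi_{g'}=\Id$ on $\Z\str v$ for every $2$-face loop $g'=(e_1,\dots,e_r)$ based at any vertex $v$ of its $2$-face $F$.

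I would dispose of transversal edges $t\in\str v\setminus E(F)$ first. Using the decomposition $\varphi_e(x)=\nabla_e(\eps(e)\cdot x)+\la x,e\ra\,\eps(\overline e)\cdot c(\overline e)$, the rank-one correction contributes nothing unless $x$ has a non-zero $e$-component. Since the connection preserves the transversal edges of $F$, no correction ever fires in the iterated transport of $t$, so
\[
\varphi_{g'}(t) = S\cdot\Pi_{g'}(t), \qquad S:=\prod_{i=1}^{r}\la\eps(e_i),t_{i-1}\ra,
\]
where $t_i:=\nabla_{e_i}\cdots\nabla_{e_1}(t)$. The transversal-monodromy hypothesis gives $\Pi_{g'}(t)=t$, while iterating the axial-function congruence~\eqref{eq:epsandc} around $F$ yields
\[
(1-S)\,\widetilde{\alpha}(t)\in\Z\text{-span}\{\widetilde{\alpha}(e_1),\dots,\widetilde{\alpha}(e_r)\}.
\]
The right-hand side lies in the $2$-plane of $F$ in $\Z^k$, whereas $\widetilde{\alpha}(t)$ lies outside it by $\GKM_3$. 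Thus $S=+1$ and $\varphi_{g'}(t)=t$.

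For the in-plane edges $e_1,\overline{e_r}\in\str v$ I would exploit the axial function itself. By Lemma~\ref{lm:labelaxialfunctions}, each component $\widetilde{\alpha}_j$ of the lifted axial function produces an element $\Phi(\widetilde{\alpha}_j)\in A(\Gamma)$ with
\[
\Phi(\widetilde{\alpha}_j)_v = \widetilde{\alpha}_j(e_1)\,e_1 + \widetilde{\alpha}_j(\overline{e_r})\,\overline{e_r} + \sum_{t\text{ transversal}}\widetilde{\alpha}_j(t)\,t.
\]
This is $\varphi_{g'}$-fixed; subtracting the already-fixed transversal part shows that the in-plane vector $\widetilde{\alpha}_j(e_1)\,e_1+\widetilde{\alpha}_j(\overline{e_r})\,\overline{e_r}\in\Z\{e_1,\overline{e_r}\}$ is fixed for each $j=1,\dots,k$. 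As $\widetilde{\alpha}(e_1)$ and $\widetilde{\alpha}(\overline{e_r})$ are linearly independent in $\Z^k$ by $\GKM_2$, these $k$ vectors span a rank-$2$ sublattice of $\Z\{e_1,\overline{e_r}\}$. Combined with the $n-2$ transversal fixed generators, the $\varphi_{g'}$-fixed subgroup has rank $n$, hence finite index, in $\Z\str v$. Since $\varphi_{g'}-\Id$ is $\Z$-linear and vanishes on a finite-index subgroup of the torsion-free module $\Z\str v$, it must vanish identically.

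The hard part is the transversal sign computation: this is exactly where the $\GKM_3$ assumption and the transversal-monodromy hypothesis combine to exclude $S=-1$. The in-plane argument is then almost formal, the key observation being that the already-present axial function components supply enough invariants in the $2$-dimensional in-plane direction by virtue of $\GKM_2$ alone.
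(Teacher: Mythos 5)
Your proof follows essentially the same route as the paper: reduce via Proposition~\ref{pr:invariants} to triviality of the A-monodromy on conjugated $2$-face loops, fix the $n-2$ transversal directions using the monodromy hypothesis, fix the two in-plane directions using the invariant elements $\Phi(\widetilde{\alpha}_j)$ supplied by the axial function itself, and conclude from a rank-$n$ fixed submodule of the torsion-free module $\Z\str v$. The one genuine difference is your treatment of the sign $S=\prod_i\langle\eps(e_i),t_{i-1}\rangle$ in the unsigned case: the paper simply writes $\varphi_\gamma(e)=\nabla_{e_q}\cdots\nabla_{e_1}e$ for transversal $e$ (its formula~\eqref{eq:phiongenerators} omits the factor $\langle\eps(e),e'\rangle$ in $\varphi_e(e')=\langle\eps(e),e'\rangle\,\nabla_e e'$), whereas you actually prove $S=+1$ by iterating the congruence~\eqref{eq:epsandc} around the face and invoking $\GKM_3$ against the $2$-plane of $F$ --- a worthwhile extra verification that closes a small sign gap in the unsigned case of the paper's own argument.
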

	\begin{proof}
		We consider only the unsigned case, because the same argument holds when $\Gamma$ has a signed structure.
		By Proposition~\ref{pr:invariants} we have to show that the $\pi_1(\Gamma,u)$-action on $\Z \str u$ is trivial, for some base point $u$. Consider a cycle $g$ at $u$ of the form 
		\[
		h_{1}^{-1} g_{1} h_{1}\cdots h_{q}^{-1} g_{q} h_{q},
		\]
		where $g_{j}$ is a cycle bounding a $2$-face $\Gamma_j$ in $\Gamma$ and the $h_j$ are paths based at $u$. Then
		\begin{equation}\label{eq:mainthmphi}
			\varphi_{g}= \varphi_{h_{1}^{-1}} \varphi_{g_{1}} \varphi_{h_{1}}\cdots \varphi_{h_{q}^{-1}} \varphi_{g_{q}} \varphi_{h_{q}}.
		\end{equation}
		We will show that every $\varphi_{\gamma}$ acts trivially on $\Z \str v$, where $\gamma$ is a cycle bounding a $2$-face $\Gamma'$ based at some vertex  $v:=i(\gamma)$. This will conclude the proof: then all $\varphi_{g_j}$ are the identity and by \eqref{eq:mainthmphi} also $\varphi_g$, hence by the assumption on the fundamental group of $\Gamma$ the whole $A$-monodromy representation of $\pi_1(\Gamma,u)$ on $\Z \str u$ is trivial.
		
		Since the original labeling of $\Gamma$ induces on $\Gamma'$ the structure of a torus graph, $A(\Gamma)\cong (\Z \str v)^{\pi_1(\Gamma,v)}$ contains two linearly independent elements, say $x_i$, $i=1,2$. By definition of the action of $\pi_1(\Gamma,v)$, these are in particular invariant under $\varphi_{\gamma}$.
		Furthermore, $x_{i}$, $i=1,2$, remain linearly independent upon the natural projection
		\[
		\pi\colon \Z \str_{\Gamma} v\to \Z \str_{\Gamma'} v = \Z e_1 \oplus \Z \overline{e_q}.
		\]		
		If $e\in\str v$ is transversal to $\Gamma'$ (i.e., different from $e_1$ and $\overline{e_q}$), then 
		\[
		\varphi_{\gamma}(e) = \varphi_{e_q}\circ \cdots \circ \varphi_{e_1}(e) = \nabla_{e_q}\cdots \nabla_{e_1} e = e,
		\]
		as the edge $e$ stays transversal upon successive applications of the connection, and as by assumption the monodromy is trivial on transversal edges. We showed that $\varphi_{\gamma}$ acts trivially on a rank $n$ submodule of $\Z \str v$, hence trivially on all of $\Z \str v$.
	\end{proof}
	
	\begin{rem}
		The condition on the monodromy is automatically satisfied in the GKM$_4$ case.
	\end{rem}
	
	\begin{rem}
		The statement of this theorem was claimed in \cite[Theorem 2.1.3]{wa-23}; however, its proof is faulty. Indeed, the algorithm to construct a certain maximal tree in the proof of \cite[Lemma 2.1.1]{wa-23} is unclear. In the notation of \cite{wa-23}, after having chosen the connection path $\gamma_1$, it might a priori happen that every other connection path intersects $\gamma_1$ in a disconnected set. In this case, the algorithm would not produce a tree. It seems that this is related to unsolved questions on the shellability of the CW complex $Q_2$ obtained by gluing $2$-discs to every connection path of $\Gamma$.
		
		Moreover, at the end of \cite[Section 2.1]{wa-23} it is claimed that for GKM$_4$ manifolds the condition on the generation of the fundamental group is automatically satisfied. This is unclear; while by Corollary \ref{cor:perfg} the Abelianization of this fundamental group vanishes, to our knowledge there exists no argument why the fundamental group should be trivial.
	\end{rem}
	
	\begin{rem}\label{rem:genext}
		More generally, Theorem~\ref{thm:extmain} holds for any $(n,k)$-type $\GKM_{3}$ graph $\Gamma$ with countably many vertices and edges such that any $2$-face of $\Gamma$ has only finitely many distinct edges and vertices.
		This assumption is required for a cycle around an arbitrary $2$-face to be well defined.
		Notice that any cycle in $\Gamma$ has a finite number of edges by compactness.
		This shows that the above proof of Theorem~\ref{thm:extmain} carries over to such more general situation.
		We will use this generalization in Section~\ref{sec:covgkm} to prove Theorem~\ref{thm:comp1}.
	\end{rem}
	
	\section{Extensions of realizable complexity one GKM$_4$ graphs}\label{sec:covgkm}
	
	In this section we prove Theorem~\ref{thm:3} (see Theorem~\ref{thm:comp1} below) using the acyclicity result~\cite{ay-ma-23} for orbit space skeleta of GKM manifolds (cf. Theorem~\ref{thm:ams} and Corollary~\ref{cor:perfg}).
	\begin{defn}
		A morphism of graphs is called a \textit{graph covering} if it is a covering of the underlying topological spaces.
		Given GKM-graphs $\widetilde{\Gamma}$, $\Gamma$ with the underlying respective graphs $(\widetilde{V},\widetilde{E})$ and $(V,E)$, a graph covering $f\colon (\widetilde{V},\widetilde{E})\to (V,E)$ is said to be a \textit{covering of GKM graphs} if 
		\[
		\widetilde{\alpha}(e)=\alpha(f(e)),\ 
		\nabla_{f(e)}(f(e'))=f(\nabla_{e}(e')),
		\]
		hold for all edges $e,e'\in \widetilde{E}$ with $i(e)=i(e')$ .
	\end{defn}
	
	\begin{rem}
		A GKM cover is nothing other than a GKM fiber bundle~\cite{gu-sa-za-12} whose fiber has no edges.
	\end{rem}
	
	Given a $\GKM_{3}$ manifold $M$ with orbit space $Q$, let $p\colon \widetilde{Q_{2}}\to Q_{2}$ be the universal cover of the orbit space of the $2$-skeleton of the action. The underlying graph of the GKM graph $\Gamma$ is homeomorphic to $Q_1$, hence naturally a subset of $Q_2$. The preimage $\widetilde{Q_1}:=p^{-1}(Q_{1})$ then is a graph as well, and the restriction of $p$ to $p:\widetilde{Q_1}\to Q_1$ is a graph morphism.
	
	\begin{lm}\label{lm:covgkm}
		We can lift the GKM graph structure $\Gamma$ on $Q_1$ via $p$ to obtain a GKM graph $\widetilde{\Gamma}$ with (possibly infinite) underlying graph $\widetilde{Q_1}$. Consequently we obtain a GKM covering $\pi\colon \widetilde{\Gamma}\to \Gamma$. This covering is trivial over any $2$-face.
	\end{lm}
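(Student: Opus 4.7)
The plan is to transport the GKM structure from $\Gamma$ to $\widetilde{Q_1}$ by pullback along $p$, and then to deduce triviality of $\pi$ over a $2$-face from the simple connectivity of the corresponding $2$-face of $Q_2$.

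For the first step, I would use that $p\colon \widetilde{Q_2}\to Q_2$, being a covering, is a local homeomorphism. For each vertex $\widetilde v\in \widetilde{Q_1}$ with image $v=p(\widetilde v)$, a sufficiently small neighborhood of $\widetilde v$ maps homeomorphically onto a neighborhood of $v$, inducing a bijection $p_\ast\colon \str\widetilde v\to\str v$. In particular $\widetilde{Q_1}$ is again $n$-valent. I then define
\[
\widetilde\alpha(\widetilde e):=\alpha(p(\widetilde e)),\qquad
\widetilde\nabla_{\widetilde e}:=p_\ast^{-1}\circ\nabla_{p(\widetilde e)}\circ p_\ast.
\]
Each GKM axiom (axial function reversal, pairwise linear independence of labels at each vertex, and compatibility of the connection with the axial function) reduces via $p_\ast$ to the corresponding statement for $\Gamma$. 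Hence $\widetilde\Gamma$ is a GKM graph, and $\pi := p|_{\widetilde{Q_1}}$ is a GKM covering by construction.

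For the second step, any $2$-face $\Theta$ of $\Gamma$ bounds a $2$-face $F$ of $Q_2$, namely the closure of the component of $Q_2\setminus Q_1$ whose boundary graph is $\Theta$. Since $p$ is the universal cover of $Q_2$, the restricted covering $p^{-1}(F)\to F$ is trivial as soon as $\pi_1(F)\to\pi_1(Q_2)$ is trivial; in particular, it is trivial whenever $F$ itself is simply connected. Triviality of $\pi$ over $\Theta$ then follows by restricting the product decomposition of $p^{-1}(F)$ to the $1$-skeleton.

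The main obstacle is to establish simple connectivity of $F$. The plan is to use Lemma~\ref{lm:efface}: $F=M_F/T$ is the orbit space of the face submanifold $M_F$, which, after quotienting by its $(n-2)$-dimensional ineffective kernel, is a $4$-dimensional equivariantly formal torus manifold with finite fixed-point set. Theorem~\ref{thm:ams} applied to this torus manifold gives $\widetilde H^i(F;\Z)=0$ for $i\le 1$; upgrading this to $\pi_1(F)=1$ requires the structural description of orbit spaces of $4$-dimensional equivariantly formal torus manifolds available in~\cite{ay-ma-23, ay-ma-so-23}, via which $F$ is in fact a topological disc. With this in hand, triviality of the covering over $\Theta$ is immediate.
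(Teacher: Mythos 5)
Your proposal is correct and follows essentially the same route as the paper: pull back the axial function and connection along the covering, and deduce triviality over a $2$-face from the fact that the corresponding $2$-cell $F$ of $Q_2$ is simply connected (the paper simply asserts that $F$ is contractible, whereas you justify this via the orbit space structure of the associated $4$-dimensional equivariantly formal torus manifold). The only minor structural difference is that you define the lifted connection purely locally via the star bijections $p_\ast$, while the paper first establishes triviality over $2$-faces and then lifts the connection; both are valid.
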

	\begin{proof}
		The map $\widetilde{\alpha}:=\alpha\circ p$ is an axial function on $\widetilde{Q_{1}}$.
		The cover $p$ over any $2$-cell $F$ of $Q_{2}$ (i.e. a $2$-face of $\Gamma$) is trivial, because $F$ is a contractible space.
		This establishes the triviality of the covering over any $2$-face.
		Therefore, we can define the connection on $\widetilde{Q_{1}}$ by lifting the connection of $\Gamma$.
		In particular, the congruence relation for $\widetilde{\Gamma}$ follows by considering any lift of any triple $e$, $e'$, $e''=\nabla_{e'} e$ of consecutive edges in any $2$-face of $\Gamma$.
		Therefore, $\widetilde{\Gamma}$ is a GKM graph.
		Clearly, the induced map $\pi\colon \widetilde{\Gamma}\to \Gamma$ is a GKM covering.
	\end{proof}
	
	Let $D:=\pi_{1}(Q_{2})$ be the deck transformation group of the covering $p$. The group $D$ acts on the graph $\widetilde{\Gamma}$ by automorphisms.
	Therefore, $D$ acts on the group of axial functions $A(\widetilde{\Gamma})$ as follows: for $f=(f_v)_{v\in V_{\widetilde{\Gamma}}}\in A(\widetilde{\Gamma})$ we put
	\begin{equation}\label{eq:dact}
		(g\cdot f)_v:= f_{g^{-1}v},\ v\in V_{\widetilde{\Gamma}},\ g\in D,\ f\in A(\widetilde{\Gamma}).
	\end{equation} 
	This action is a representation  $R\colon D\to \GL(A(\widetilde{\Gamma}))$ on the free abelian group $A(\widetilde{\Gamma})$.
	The map $\pi$ induces a well-defined pullback map $\pi^*\colon A(\Gamma)\to A(\widetilde{\Gamma})$ given by
	\begin{equation}\label{eq:pbfla}
		(\pi^{*} f)_v:=f_{\pi(v)},\ v\in V_{\widetilde{\Gamma}},\ f\in A(\Gamma).
	\end{equation}
	As $\pi$ is surjective on vertices, Equation \eqref{eq:pbfla} shows that $f$ can be reconstructed from $\pi^*f$; hence, $\pi^*$ is injective.
	Denote the subgroup of $D$-invariant elements in  $A(\widetilde{\Gamma})$ by $A(\widetilde{\Gamma})^{D}$.
	
	\begin{lm}\label{lm:pbim}
		One has the following equality of subgroups in $A(\widetilde{\Gamma})$:
		\[
		\Img \pi^{*}=A(\widetilde{\Gamma})^{D}.
		\]
	\end{lm}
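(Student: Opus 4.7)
\medskip
\noindent\textbf{Proof plan.} The plan is to establish the two inclusions separately; both are formal manipulations, the main point being careful bookkeeping under the identifications $\str \widetilde v \cong \str \pi(\widetilde v)$ induced by the covering.

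For $\Img \pi^{*}\subseteq A(\widetilde{\Gamma})^{D}$, I would argue directly from~\eqref{eq:dact} and~\eqref{eq:pbfla}. Any deck transformation $g\in D$ satisfies $\pi\circ g=\pi$, so for $f\in A(\Gamma)$ and $\widetilde v\in V_{\widetilde\Gamma}$,
\[
(g\cdot \pi^{*}f)_{\widetilde v}=(\pi^{*}f)_{g^{-1}\widetilde v}=f_{\pi(g^{-1}\widetilde v)}=f_{\pi(\widetilde v)}=(\pi^{*}f)_{\widetilde v}.
\]

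For the reverse inclusion I would define, given $\widetilde f=(\widetilde f_{\widetilde v})_{\widetilde v}\in A(\widetilde\Gamma)^{D}$, a tuple $f=(f_{v})_{v\in V_{\Gamma}}$ by choosing, for each $v\in V_\Gamma$, a lift $\widetilde v\in \pi^{-1}(v)$ and setting $f_{v}:=\widetilde f_{\widetilde v}$, where we view $\widetilde f_{\widetilde v}$ as an element of $\Z\str v$ via the bijection $\str \widetilde v\to \str v$ coming from the covering $\pi$. Well-definedness is where the $D$-invariance enters: since $p\colon \widetilde{Q_{2}}\to Q_{2}$ is the universal covering, $D=\pi_{1}(Q_{2})$ acts transitively on the fiber $\pi^{-1}(v)$, so any two choices of $\widetilde v$ differ by some $g\in D$, and the defining relation $(g\cdot \widetilde f)_{\widetilde v}=\widetilde f_{\widetilde v}$ identifies the two candidate values.

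It remains to check that $f\in A(\Gamma)$, i.e.\ that the relation~\eqref{eq:reformulateAGamma} holds at every $e\in E_{\Gamma}$. For this, lift $e$ to $\widetilde e\in E_{\widetilde\Gamma}$ with $i(\widetilde e)$ equal to the chosen preimage of $i(e)$. Because $\pi$ is a GKM covering, it intertwines the labels, the sign and invariant functions, and the connection, and hence under the identifications above the operator $\varphi^{\widetilde\Gamma}_{\widetilde e}$ of~\eqref{eq:lfunc} is conjugate to $\varphi^{\Gamma}_{e}$. Applying~\eqref{eq:reformulateAGamma} to $\widetilde f$ at $\widetilde e$ therefore gives the relation for $f$ at $e$. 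Finally $\pi^{*}f=\widetilde f$ holds by construction (again using $D$-invariance to move to an arbitrary fiber point), which exhibits $\widetilde f$ as an element of $\Img \pi^{*}$.

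The only potential obstacle is the identification of stars across the cover, but this is unambiguous because a GKM covering is a local isomorphism of GKM structures; all remaining work is formal.
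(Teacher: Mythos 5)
Your proposal is correct and follows essentially the same route as the paper: the inclusion $\Img\pi^{*}\subseteq A(\widetilde{\Gamma})^{D}$ by comparing~\eqref{eq:dact} and~\eqref{eq:pbfla}, and the reverse inclusion from transitivity of $D$ on the fibers of $\pi$. You additionally spell out the verification that the descended tuple satisfies~\eqref{eq:reformulateAGamma}, a detail the paper leaves implicit; this is a welcome but not essential addition.
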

	\begin{proof}
		We compare two respective subgroups of $A(\widetilde{\Gamma})$ as follows.
		The inclusion $\Img \pi^{*}\subseteq A(\widetilde{\Gamma})^{D}$ follows directly by comparing~\eqref{eq:dact} and~\eqref{eq:pbfla}.
		The group $D$ acts transitively on any fiber of $\pi$.
		Therefore, for any element $f\in A(\widetilde{\Gamma})^{D}$, the value $f_v$, $v\in \pi^{-1}(w)$, does not depend on the choice of $v$ for any fixed $w\in V_{\Gamma}$.
		This shows the inclusion $A(\widetilde{\Gamma})^{D}\subseteq \Img \pi^{*}$.
	\end{proof}
	
	The following is a simple but important observation that is used in the proof below.
	
	\begin{lm}\label{lm:rept}
		Any homomorphism of a perfect group into an abelian group is trivial.
	\end{lm}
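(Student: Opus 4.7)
The plan is to use the universal property of the abelianization. A group $G$ is perfect by definition if $G = [G,G]$, equivalently if its abelianization $G^{\mathrm{ab}} := G/[G,G]$ is trivial. If $\phi\colon G \to A$ is any homomorphism into an abelian group $A$, then $\phi$ sends every commutator $[g,h] = ghg^{-1}h^{-1}$ to $[\phi(g),\phi(h)] = 1$ since $A$ is abelian. Hence $[G,G] \subseteq \ker \phi$, so $\phi$ factors through the canonical projection $G \twoheadrightarrow G^{\mathrm{ab}}$.

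The second step is to combine this with perfectness: since $G = [G,G]$, we get $G \subseteq \ker\phi$, which means $\phi$ is the trivial homomorphism. Equivalently, the induced map $G^{\mathrm{ab}} \to A$ has domain the trivial group, so it is trivial, and hence so is $\phi$.

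There is no real obstacle here — the argument is a one-line consequence of the universal property of abelianization. The only thing to be careful about is to state the factorization cleanly, so that the lemma is immediately applicable in the proof of Theorem~\ref{thm:comp1}, where it will be used to conclude that the action of the perfect group $D = \pi_1(Q_2)$ (perfect by Corollary~\ref{cor:perfg}) on the relevant abelian invariants is trivial.
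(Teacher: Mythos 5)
Your proof is correct and is the standard argument (a homomorphism into an abelian group kills all commutators, hence factors through the abelianization, which is trivial for a perfect group); the paper itself omits the proof entirely, treating the lemma as an elementary observation, so your write-up simply supplies the canonical justification the authors had in mind.
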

	
	We prove Theorem~\ref{thm:3}.
	
	\begin{thm}\label{thm:comp1}
		The GKM graph of any $\GKM_{4}$ manifold of complexity one admits an extension to a torus graph.
	\end{thm}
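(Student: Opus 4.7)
My plan is to realise the strategy sketched in the introduction: lift $\Gamma$ along a GKM cover associated to the universal cover of $Q_{2}$, apply Theorem~\ref{thm:extmain} to the covering graph $\widetilde{\Gamma}$, and descend the resulting torus graph structure back to $\Gamma$ by showing that the deck transformation group acts trivially on $A(\widetilde{\Gamma})$.

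First, Lemma~\ref{lm:covgkm} produces a GKM covering $\pi\colon\widetilde{\Gamma}\to\Gamma$ on which the deck transformation group $D:=\pi_{1}(Q_{2})$ acts by GKM automorphisms. As $M$ is $\GKM_{3}$, $D$ is perfect by Corollary~\ref{cor:perfg}; as $\Gamma$ is $\GKM_{4}$, so is its pullback $\widetilde{\Gamma}$, and therefore the monodromy hypothesis of Theorem~\ref{thm:extmain} holds automatically on $\widetilde{\Gamma}$. To apply the version of Theorem~\ref{thm:extmain} valid for countable graphs (Remark~\ref{rem:genext}) to $\widetilde{\Gamma}$, I still need to verify that the conjugated $2$-faces of $\widetilde{\Gamma}$ generate $\pi_{1}(\widetilde{\Gamma})$. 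For this I use that $Q_{2}$ can be given the structure of a $2$-dimensional CW complex whose $1$-skeleton is the topological realisation of $\Gamma$ and whose $2$-cells are the closures of the connected components of $Q_{2}\setminus Q_{1}$, each attached along the corresponding $2$-face of $\Gamma$. Since the covering is trivial over every $2$-face, this CW structure lifts to present $\widetilde{Q_{2}}$ as $\widetilde{Q_{1}}$ with $2$-discs attached along the $2$-faces of $\widetilde{\Gamma}$; the simple connectivity of $\widetilde{Q_{2}}$ then shows, via van Kampen, that $\pi_{1}(\widetilde{\Gamma})=\pi_{1}(\widetilde{Q_{1}})$ is normally generated by the boundary loops of these $2$-cells, i.e., by the conjugated $2$-faces of $\widetilde{\Gamma}$. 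An application of Theorem~\ref{thm:extmain} then extends $\widetilde{\Gamma}$ to a torus graph.

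The core of the proof is to show that $D$ acts trivially on $A(\widetilde{\Gamma})$. Since each $f\in A(\widetilde{\Gamma})$ is determined by its value $f_{\tilde v}\in\Z\str\tilde v\cong\Z^{n}$ at any single vertex $\tilde v$, we have $\rk A(\widetilde{\Gamma})\le n$; together with Theorem~\ref{thm:kurrk} applied to the extension above this forces $\rk A(\widetilde{\Gamma})=n$. As the effective GKM action on $M$ renders $\widetilde{\alpha}$ effective, the extension may be chosen effective and, by Lemma~\ref{lem:componentsofaxialfunction}, its $n$ components then form a primitive set in $A(\widetilde{\Gamma})$, hence a $\Z$-basis $f_{1},\dots,f_{n}$. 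I arrange the extension so that its first $n-1$ components are the pullbacks of those of (a lift of) the axial function of $\Gamma$; the vectors $f_{1},\dots,f_{n-1}$ then lie in $\Img\pi^{*}$ and are fixed by $D$ by Lemma~\ref{lm:pbim}. Consequently, in the basis $f_{1},\dots,f_{n}$ each $g\in D$ acts by a matrix of the form
\[
M_{g}=\begin{pmatrix}I_{n-1} & v_{g}\\ 0 & \epsilon_{g}\end{pmatrix},\qquad v_{g}\in\Z^{n-1},\ \epsilon_{g}=\det M_{g}\in\{\pm 1\}.
\]
Multiplying two such matrices shows that $g\mapsto\epsilon_{g}$ is a homomorphism into the abelian group $\{\pm 1\}$, which is trivial by Lemma~\ref{lm:rept} because $D$ is perfect. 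With $\epsilon\equiv 1$ the same computation reduces to $v_{gh}=v_{h}+v_{g}$, so $g\mapsto v_{g}$ is a homomorphism into $\Z^{n-1}$ and is likewise trivial.

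To conclude, Lemma~\ref{lm:pbim} gives $\Img\pi^{*}=A(\widetilde{\Gamma})^{D}=A(\widetilde{\Gamma})$, so the (always injective) pullback $\pi^{*}$ is an isomorphism, $\rk A(\Gamma)=n$, and a final application of Theorem~\ref{thm:kurrk} extends $\Gamma$ to a torus graph. I expect the principal obstacle to be the trivialisation step above: it works precisely because the complexity-one hypothesis forces the corank of $\Img\pi^{*}$ in $A(\widetilde{\Gamma})$ to be at most one, reducing the $D$-action to two homomorphisms into abelian groups on which the perfectness of $D$ can bite.
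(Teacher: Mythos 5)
Your proposal is correct and follows essentially the same route as the paper's proof: the GKM cover of Lemma~\ref{lm:covgkm}, the extension of $\widetilde{\Gamma}$ via Theorem~\ref{thm:extmain} and Remark~\ref{rem:genext}, and the trivialisation of the deck action on $A(\widetilde{\Gamma})$ by exhibiting the determinant and the last-column entries as homomorphisms from the perfect group $D$ into abelian groups. The only (harmless) differences are that you work with an integral basis coming from an effective extension where the paper uses a $\Q$-basis of $A(\widetilde{\Gamma})\otimes\Q$, and that you spell out the van Kampen argument for why conjugated $2$-faces generate $\pi_{1}(\widetilde{\Gamma})$, which the paper leaves implicit in the construction of the universal cover of $Q_{2}$.
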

	\begin{proof}
		Let $\Gamma$ be the GKM graph of a GKM$_4$ manifold $M$ of complexity one.
		By Lemma~\ref{lm:covgkm} we may consider the GKM covering $\pi\colon \widetilde{\Gamma}\to \Gamma$ with deck transformation group $D=\pi_1(Q_2)$. As $M$ is of complexity one, by Lemma~\ref{lem:componentsofaxialfunction} we find $n-1$ linearly independent elements in $A(\Gamma)$, which by Lemma~\ref{lm:pbim} yield $n-1$ linearly independent $D$-invariant elements $f_{1},\dots,f_{n-1}\in A(\widetilde{\Gamma})$.
		
		As by construction the fundamental group $\pi_1(\widetilde{\Gamma})$ is generated by conjugated $2$-faces, and as by the GKM$_4$ assumption the monodromy along any $2$-face is trivial on transversal edges, Theorem~\ref{thm:extmain} implies that $\widetilde{\Gamma}$ admits an extension to a torus graph. (Note that $\widetilde{\Gamma}$ is potentially an infinite graph, but see Remark ~\ref{rem:genext}. The condition of finite length for any $2$-face in $\widetilde{\Gamma}$ is satisfied by Lemma~\ref{lm:covgkm}.) So we know from Lemma~\ref{lem:componentsofaxialfunction} that $A(\widetilde{\Gamma})$ has rank $n$. We may therefore choose $f_{n}\in A(\widetilde{\Gamma})$ such that the collection $f_{1},\dots,f_{n}$ is a basis of $A(\widetilde{\Gamma})\otimes\Q$.
		
		We wish to prove that the representation $R$ of $D$ on $A(\widetilde{\Gamma})\otimes \Q$ is a trivial representation. For any $g\in D$ let 
		\[
		g\cdot f_{n}=\sum_{i=1}^{n} a_{i}(g) f_{i},\ a_{i}(g)\in\Q.
		\]
		The matrix of the linear operator $R(g)$ in the basis $f_{1},\dots,f_{n}$ differs from the identity matrix only in the $n$-th column, which consists of the entries $a_i(g)$. As the group $D$ is perfect by Corollary~\ref{cor:perfg}, the homomorphism 
		\[
		\det \circ R:D\to \Q^{\times}:=\Q\setminus \lb 0\rb,
		\] 
		into the multiplicative group $\Q^\times$ is trivial by Lemma~\ref{lm:rept}. Hence 
		\[
		a_n(g)=\det R(g)=1,\ g\in D.
		\] 
		We claim that $a_{i}\colon D\to \Q$ is a homomorphism into the additive group $\Q$ for any $i=1,\dots,n-1$.
		Indeed, for any $g,h\in D$ one has
		\[
		\sum_{i=1}^n a_i(gh)f_i=g\cdot (h\cdot f_{n})=
		\sum_{i=1}^{n-1} a_{i}(h) g\cdot f_{i} + g\cdot  f_{n}=
		\sum_{i=1}^{n-1} a_{i}(h) f_{i} + \sum_{i=1}^n a_i(g)f_i.
		\]
		Here we used the $D$-invariance property of $f_{1},\dots,f_{n-1}$.
		Then Lemma~\ref{lm:rept} implies that $a_{i}=0$ for all $i=1,\dots,n-1$.
		Therefore, $f_{n}$ is $D$-invariant, and the representation $R$ is trivial.
		By Lemma~\ref{lm:pbim} we conclude that $\rk A(\Gamma)=n$.
		Now the claim on the extension to a torus graph for $\Gamma$ follows directly from Theorem~\ref{thm:kurrk}.
	\end{proof}
	
	\begin{rem}
		Consider a more general situation (related to Problem~\ref{prob:mas} $(ii)$), when $\Gamma$ is any (finite) abstract $\GKM_{4}$ graph.
		Then there exists a GKM cover $\pi\colon \widetilde{\Gamma}\to \Gamma$ that is trivial over $2$-faces such that its fundamental group is generated by conjugated $2$-faces. 
		Then $\widetilde{\Gamma}$ extends to a torus graph by Theorem~\ref{thm:extmain}.
		To prove this claim glue in $2$-discs to every $2$-face in $\Gamma$ and consider the universal covering of the resulting space as described above, as well as $\widetilde{\Gamma}:=\pi^{-1}(\Gamma)$. In this situation, however, we cannot apply  Corollary~\ref{cor:perfg}.
	\end{rem}
	
	\begin{rem}
		Notice that Theorem~\ref{thm:3} allows to drop the balanced (i.e., bipartite) condition in~\cite[Theorem 1.2]{ay-ma-so-23}, as conjectured in~\cite[Remark 5.22]{ay-ma-so-23}.
		This condition was required for the proof that $\pi_{1}(\Gamma)$ is generated by $2$-faces, and our argument does not use this condition.
		Furthermore, we point out the gap in the proof of~\cite[Proposition 5.18]{ay-ma-so-23} which our argument allows to bypass. 
		Namely, it is not clear why the orbit filtration $Q_{n-1}\to Q$ induces monomorphisms of the respective fundamental groups.
	\end{rem}
	
\begin{bibdiv}
\begin{biblist}[\resetbiblist{99}]
	\bibselect{biblio_eng}
\end{biblist}
\end{bibdiv}		
	
\end{document}